\documentclass[10pt]{article}
\usepackage{newcent} 
\usepackage{amsmath,amssymb,amsthm,url,color}
\bibliographystyle{plain}

\usepackage{enumerate}
\usepackage{hyperref}
\usepackage{graphicx}
\usepackage{epstopdf}
\DeclareGraphicsRule{.tif}{png}{.png}{`convert #1 `dirname #1`/`basename #1 .tif`.png}

\def\today{\number\day\ \ifcase\month\or
	January\or February\or March\or April\or May\or June\or
	July\or August\or September\or October\or November\or December\fi
	\space \number\year}
\def\gobble#1#2{}
\def\shortdate{\number\day/\number\month/\expandafter\gobble\number\year}

\setlength{\textwidth}{6.5in} \hoffset -2truecm
\setlength{\textheight}{9.25in} \voffset -2.75truecm%

\def\R{\mathbb{R}}
\def\p{Pain\-lev\'e}

\def\peqs{\p\ equations}

\def\det{\mathop{\rm det}\nolimits}

\def\ds{\displaystyle}

%{2}
%{2}

\newcommand{\HyperpFq}[2]{{}_{#1}F_{#2}}
%{\HyperpFq21}

\def\d{{\rm d}}\def\e{{\rm e}}

\def\a{\alpha}
\def\b{\beta}

\def\la{\lambda}

\def\w{\omega}
\def\ep{\varepsilon}

\def\ph{\varphi}
\def\vph{\varphi}
\def\A{\mathcal{A}}
\def\B{\mathcal{B}}

\newcommand{\Wr}{\operatorname{Wr}}

\newcommand{\deriv}[3][]{\frac{\d^{#1}{#2}}{{\d{#3}}^{#1}}}

\newtheorem{theorem}{Theorem}[section]
\newtheorem{proposition}[theorem]{Proposition}

\theoremstyle{definition}

\newtheorem{remark}[theorem]{Remark}
\newtheorem{remarks}[theorem]{Remarks}

\numberwithin{figure}{section}
\numberwithin{equation}{section}
\numberwithin{table}{section}

\newcommand{\comment}[1]{}

\def\la{\lambda}

\def\imp{\int_{-\infty}^{\infty}}

\def\beq{\begin{equation}}
	\def\eeq{\end{equation}}
\def\ds{\displaystyle}

\definecolor{dkg}{rgb}{0,0.7,0}
\definecolor{dkr}{rgb}{0.9,0,0}
\definecolor{dkb}{rgb}{0,0,0.7}
\definecolor{purple}{rgb}{0.5,0,0.7}

\definecolor{gold}{rgb}{0.83, 0.69, 0.22}

\def\url#1{\href{#1}{#1}}

\begin{document}
	
	\title{Generalised higher-order Freud weights}
	
	\author{Peter A. Clarkson$^{1}$, Kerstin Jordaan$^{2}$ and Ana Loureiro$^{1}$\\[2.5pt]
$^{1}$ School of Mathematics, Statistics and Actuarial Science,\\ University of Kent, Canterbury, CT2 7FS, UK\\[2.5pt]
$^{2}$ Department of Decision Sciences,\\
University of South Africa, Pretoria, 0003, South Africa\\[2.5pt]
Email: {P.A.Clarkson@kent.ac.uk};\enskip {jordakh@unisa.ac.za};\enskip{A.Loureiro@kent.ac.uk}
%\\[2.5pt] ORCID iDs: PAC: 0000-0002-8777-5284;\enskip KJ: 0000-0002-1675-5366;\enskip AL: 0000-0002-4137-8822
}
	
	\maketitle
	
\begin{abstract}
We discuss polynomials orthogonal with respect to a semi-classical generalised higher order Freud weight 
\[\omega(x;t,\lambda)=|x|^{2\lambda+1}\exp\left(tx^2-x^{2m}\right),\qquad x\in\mathbb{R},\]
with parameters $\lambda > -1$,  $t\in\mathbb{R}$ and $m=2,3,\dots$\ . The sequence of generalised higher order Freud weights for $m=2,3,\dots$, forms a hierarchy of weights, with associated hierarchies for the first moment and the recurrence coefficient. We prove that the first moment can be written as a finite partition sum of generalised hypergeometric $_1F_m$ functions and show that the recurrence coefficients satisfy difference equations which are members of the first discrete Painlev\'e hierarchy. We analyse the asymptotic behaviour of the recurrence coefficients and the limiting distribution of the zeros as $n \to \infty$. We also investigate structure and other mixed recurrence relations satisfied by the polynomials and related properties.  
	\end{abstract}

	\section{Introduction}
	In this paper we consider polynomials orthogonal with respect to the \textit{generalised higher order Freud weight}
\beq \label{freudg}
\w(x;t,\la)=|x|^{2\la+1}\exp\left(tx^2-x^{2m}\right),%\qquad \la>-1,
\qquad x,t \in\R, \qquad m=2,3,\dots,\eeq 
with $\la>-1$ a parameter. 
{%\color{blue}
The main goal of this paper is to bring a comprehensive self-contained analysis of these polynomials when the parameter \(m\) takes integer values higher than \(1\) and for any values of \(\lambda>-1\) and \(t\in\mathbb{R}\). The analysis for the particular cases of \(m=2,3\) was considered in \cite{refCJ18,refCJ21a,refCJ21b,refCJK,refMagnus85,refMagnus86}, with an emphasis on the study of the corresponding recurrence coefficients. We significantly extend existing studies on Freud type weights whilst providing a coherent and consistent approach, using techniques which are also likely to be adopted in the study of other semi-classical type weights. Throughout, we link and explain the connections to existing theory. After giving a short mathematical background in \S2, in \S \ref{sec:Freud4weight} we give a closed form expression for the moments with respect to the weight \eqref{freudg}, which correspond to a finite partition sum of generalised hypergeometric $_1F_m$ functions. 
The corresponding recurrence coefficients in the three term recurrence relation are investigated in \S \ref{sec:rcoef}. Therein we prove a recursive method that gives nonlinear recurrence relations satisfied by these  recurrence coefficients (see Proposition \ref{lm34}) and give them explicitly for the cases where \(m=4,5\), whilst recovering the already known relations for \(m=2,3\). We prove structure relations and mixed recurrence relations satisfied by generalised higher order Freud polynomials in \S \ref{sec:relations}. 
The asymptotic behaviour of the recurrence coefficients proved in \S \ref{sec:rcoef} determines the limiting distribution of the zeros and this, as well as other properties of the zeros, is investigated in \S \ref{sec:zeros}. We conclude with the quadratic decomposition of the generalised higher order Freud weight in \S \ref{sec:qdecomp}. }

\section{Mathematical background}%\label{sec: math back}

{Let \(\nu\) be a positive measure on the real line for which the support is not finite and all the moments 
\beq\label{eq:moment}
\mu_k=\imp x^k\,\d\nu (x), \qquad k=0,1,\dots,
\eeq
exist. The corresponding monic orthogonal polynomial sequence ${\big\{P_{n}(x)\big\}_{n\geq0}}$ is defined by \[\imp P_m(x)P_{n}(x)\,\d\nu (x) = h_{n}\delta_{m,n},\qquad h_{n}>0\]
where $\delta_{m,n}$ denotes the Kronekar delta. A fundamental property of orthogonal polynomials is that they satisfy a three-term recurrence relation of the form
	\beq \label{eq:3trr}
	P_{n+1}(x)=(x-\a_{n})P_{n}(x)-\b_{n}P_{n-1}(x),
	\eeq
	with $\b_{n}>0$ and initial values $P_{-1}(x)=0$ and $P_{0}(x)=1$. The recurrence coefficients $\a_{n}$ and $\b_{n}$ are given by the integrals
	\[
	\a_{n} = \frac1{h_{n}}\imp xP_{n}^2(x)\,\d\nu (x),\qquad \b_{n} = \frac1{h_{n-1}}\imp xP_{n-1}(x)P_{n}(x)\,\d\nu (x).
	\]
 Relevant for this paper is the case of a measure that admits a representation via a positive weight function \(\w(x)\) on the real line as follows \(\d \nu(x) = \w(x) \,\d x\). Henceforth, we will only work with a weight function representation. 
}

\comment{\color{red}	Let $\w(x)$ be a positive weight function defined on the real line $\R$ for which all the moments \beq \label{eq:moment}
	\mu_k=\imp x^k\w(x)\,\d x, \qquad k=0,1,\dots,\eeq exist. Then the sequence of monic orthogonal polynomials ${\big\{P_{n}(x)\big\}_{n \geq0}}$, where $P_{n}(x)$ is a polynomial of degree $n$ in $x$, is given by \beq \nonumber\imp P_m(x)P_{n}(x)\,\w(x)\,\d x = h_{n}\delta_{m,n},\qquad h_{n}>0,\label{eq:norm}\eeq
	where $\delta_{m,n}$ denotes the Kronekar delta.
	 
A fundamental property of orthogonal polynomials is that they satisfy a three-term recurrence relation of the form
	\beq \label{eq:3trr}
	P_{n+1}(x)=(x-\a_{n})P_{n}(x)-\b_{n}P_{n-1}(x),
	\eeq
	with $\b_{n}>0$ and initial values $P_{-1}(x)=0$ and $P_{0}(x)=1$. The recurrence coefficients $\a_{n}$ and $\b_{n}$ are given by the integrals
	\[
	\a_{n} = \frac1{h_{n}}\imp xP_{n}^2(x)\,\w(x)\,\d x,\qquad \b_{n} = \frac1{h_{n-1}}\imp xP_{n-1}(x)P_{n}(x)\,\w(x)\,\d x.
	\]
 }
 
The coefficient $\b_{n}$ in the recurrence relation \eqref{eq:3trr} can also be expressed in terms of the Hankel determinant 	\beq\label{eq:detsDn}
	\Delta_{n}=\det\big[\mu_{j+k}\big]_{j,k=0}^{n-1}=\left|\begin{matrix} \mu_0 & \mu_1 & \ldots & \mu_{n-1}\\
		\mu_1 & \mu_2 & \ldots & \mu_{n}\\
		\vdots & \vdots & \ddots & \vdots \\
		\mu_{n-1} & \mu_{n} & \ldots & \mu_{2n-2}\end{matrix}\right|,\qquad n\geq1,\eeq
	with $\Delta_0=1$, $\Delta_{-1}=0$, whose entries are given in terms of the moments \eqref{eq:moment} associated with the weight $\w(x)$. Specifically
	\beq\label{def:bn}
	%\a_{n} = \frac{\widetilde{\Delta}_{n+1}}{\Delta_{n+1}}-\frac{\widetilde{\Delta}_{n}}{\Delta_{n}},\qquad
	\b_{n} = \frac{\Delta_{n+1}\Delta_{n-1}}{\Delta_{n}^2}.\eeq
	
	The monic polynomial $P_{n}(x)$ can be uniquely expressed as the determinant
	\beq\nonumber P_{n}(x)=\frac1{\Delta_{n}}\left|\begin{matrix} \mu_0 & \mu_1 & \ldots & \mu_{n}\\
		\mu_1 & \mu_2 & \ldots & \mu_{n+1}\\
		\vdots & \vdots & \ddots & \vdots \\
		\mu_{n-1} & \mu_{n} & \ldots & \mu_{2n-1}\\
		1 & x & \ldots &x^n\end{matrix}\right|,\label{eq:Pndet}\eeq
	and the normalisation constants as
	\beq\label{def:norm}h_{n}=\frac{\Delta_{n+1}}{\Delta_{n}},\qquad h_0=\Delta_1=\mu_0.
	\eeq
	Also from \eqref{def:bn} and \eqref{def:norm}, we see that the relationship between the recurrence coefficient $\b_{n}$ and the normalisation constants $h_{n}$ is given by
	\beq\label{bn:hn}\nonumber h_{n}=\b_{n} h_{n-1}.\eeq 
%\subsection{\label{sec:op}Orthogonal polynomials with symmetric weights}

For symmetric weights, since $\w(x)=\w(-x)$, it follows that $\a_{n}=0$ in \eqref{eq:3trr}. Hence, for symmetric weights, the sequence of monic orthogonal polynomials ${\big\{P_{n}(x)\big\}_{n\geq0}}$, satisfy the three-term recurrence relation
\beq\label{eq:srr}
P_{n+1}(x)=xP_{n}(x)-\b_{n}P_{n-1}(x).
\eeq
The monic orthogonal polynomials $P_{n}(x)$ associated with symmetric weights are also symmetric, i.e. $P_{n}(-x)=(-1)^nP_{n}(x)$. This implies that each $P_{n}$ contains only even or only odd powers of $x$ and we can write 
\begin{align*} P_{2n}(x)&=x^{2n}+\sum_{k=1}^{n}c_{2n-2k}^{(2n)} x^{2n-2k}%\\&
=x^{2n}+c_{2n-2}^{(2n)}x^{2n-2}+\dots+c_{0}^{(2n)},\\
	P_{2n+1}(x)&=x^{2n+1}+\sum_{k=1}^{n}c_{2n-2k+1}^{(2n+1)}x^{2n-2k+1}%\\&
	=x^{2n+1}+c_{2n-1}^{(2n+1)}x^{2n-1}+\dots+c_{1}^{(2n+1)}x.
\end{align*} Substituting these expressions into the recurrence relation \eqref{eq:srr} and comparing the coefficients on each side, we obtain
\begin{align}%\nonumber
		\b_{2n}&=c_{2n-2}^{(2n)}-c_{2n-1}^{(2n+1)},\qquad \label{bodd}
		\b_{2n+1}=-\frac{c_0^{(2n+2)}}{c_0^{(2n)}}=-\frac{P_{2n+2}(0)}{P_{2n}(0)}.\end{align}
It follows from \eqref{eq:moment} that, for symmetric weights, $\mu_{2k-1}=0$, $k=1,2,\ldots\ $ and hence 
it is possible to write the Hankel determinant $\Delta_{n}$ given by \eqref{eq:detsDn} in terms of the product of two Hankel determinants obtained by matrix manipulation, interchanging columns and rows. The product decomposition, depending on $n$ even or odd, is given by
	\beq \Delta_{2n}=\A_{n}\B_{n},\qquad \Delta_{2n+1}=\A_{n+1}\B_{n},\label{res:lemma21}\eeq
	where $\A_{n}$ and $\B_{n}$ are the Hankel determinants 
	%\begin{subequations}\label{def:AnBn}
	%\Delta_{n}=\det\big[\mu_{j+k} \big]_{j,k=0}^{n-1},\qquad 
	\begin{align}\label{def:AnBn} \A_{n} &%=\det\big[\mu_{2j+2k} \big]_{j,k=0}^{n-1}
		{=\left|\begin{matrix} 
\mu_0 & \mu_2 & \ldots & \mu_{2n-2}\\
\mu_2 & \mu_4 & \ldots & \mu_{2n} \\
\vdots & \vdots & \ddots & \vdots \\
\mu_{2n-2} & \mu_{2n}& \ldots & \mu_{4n-4}
		\end{matrix}\right|,}\qquad
		\B_{n} %=\det\big[\mu_{2j+2k+2} \big]_{j,k=0}^{n-1}
		{=\left|\begin{matrix} 
\mu_2 & \mu_4 & \ldots & \mu_{2n}\\
\mu_4 & \mu_6 & \ldots & \mu_{2n+2} \\
\vdots & \vdots & \ddots & \vdots \\
\mu_{2n} & \mu_{2n+2}& \ldots & \mu_{4n-2}
		\end{matrix}\right|},\end{align}%\end{subequations}
with $\A_0=\B_0=1$. Consequently, for a symmetric weight, substituting \eqref{res:lemma21} into \eqref{def:bn}, the recurrence coefficient $\b_{n}$ is given by
	\beq \nonumber\b_{2n} = \frac{\A_{n+1}\B_{n-1}}{\A_{n}\B_{n}},\qquad
	\b_{2n+1}= \frac{\A_{n}\B_{n+1}}{\A_{n+1}\B_{n}}.\label{def:betan}
	\eeq

Semiclassical orthogonal polynomials are natural generalisations of classical orthogonal polynomials and were introduced by Shohat in \cite{refShohat39}. Maroni provided a unified theory for semiclassical orthogonal polynomials {(cf.~\cite{refMaroni,refMaroni2})}. The weights of classical orthogonal polynomials satisfy a first-order ordinary differential equation, the \textit{Pearson equation} 
	\begin{equation}\label{eq:Pearson}
		\deriv{}{x}\{\sigma(x)\,\w(x)\}=\tau(x)\,\w(x),
	\end{equation}
	where $\sigma(x)$ is a monic polynomial of degree at most $2$ and $\tau(x)$ is a polynomial with degree $1$. For semiclassical orthogonal polynomials, the weight function $\w(x)$ satisfies a Pearson equation 
	\eqref{eq:Pearson} 
	with either deg$(\sigma(x))>2$ or deg$(\tau(x))\neq1$ (cf.~\cite{refHvR,refMaroni}). 
The generalised higher order Freud weight given by \eqref{freudg}
	is a symmetric weight that satisfies the Pearson equation \eqref{eq:Pearson}
% \begin{equation}\label{w diff eq}
%\deriv{}{x}\left\{\sigma(x)\,\w(x;t,\la)\right\}=\tau(x)\,\w(x;t,\la),
% \end{equation}
 with $\sigma(x)=x$ and $\tau(x)=2(tx^2-mx^{2m}+\la+1)$ and therefore is a semiclassical weight.
 
 \comment{\color{purple}\marginpar{remove??} %The paper is organised as follows. 
 In \S \ref{sec:Freud4weight} we consider the moments of the generalised higher order Freud weight, obtaining a closed form expression for the first moment. The recurrence coefficients in the three term recurrence relation satisfied by polynomials orthogonal with respect to generalised higher order Freud weights are investigated in \S \ref{sec:rcoef}. We prove structure relations and mixed recurrence relations satisfied by generalised higher order Freud polynomials in \S \ref{sec:relations}. The asymptotic behaviour of the recurrence coefficients proved in \S \ref{sec:rcoef} determines the limiting distribution of the zeros and this, as well as other properties of the zeros, is investigated in \S \ref{sec:zeros}. We conclude with the quadratic decomposition of the generalised higher order Freud weight in \S \ref{sec:qdecomp}. }

	\section{Moments of the generalised higher order Freud weights}\label{sec:Freud4weight}
		
		The existence of the first moment $\mu_0(t;\la,m)$ associated with the generalised higher order Freud weight \eqref{freudg} follows from the fact that, at $\infty$, the integrand behaves like $\exp(-x^2)$ and, at $x=0$, the integrand behaves like $x^{\la}$ which, for $\la>-1$, is integrable. 
	
\comment{	\begin{theorem}\label{finitmom} Let $x\in \R, ~\la>-1,~ t\in \R$ and $m=2,3,\dots$. Then, for the generalised higher order Freud weight \eqref{freudg}, the first moment \[\mu_0(t;\la,m)=\int_{-\infty}^{\infty} |x|^{2\la+1}\exp(tx^2-x^{2m})\,\d x =\int_{0}^{\infty} s^{\la}\exp(ts-s^m)\,\d{s}\] is finite.
	\end{theorem}	\begin{proof} The first moment $\mu_0(t)$ takes the form	
		\begin{align} 
\nonumber \mu_0(t;\la,m) &= \int_{0}^{\infty} s^{\la}\exp(ts-s^m)\,\d{s}\\&=\int_{0}^{1} s^{\la}\exp(ts-s^m )\,\d{s}+\int_{1}^{\infty} s^{\la}\exp(ts-s^m)\,\d{s}.\label{moo1}
		\end{align}
		Note that 
		\begin{align*}
\ds\int_{0}^{1} s^{\la} \exp(ts-s^m )\,\d{s}
& \leq \int_{0}^{1} s^{\la} \exp(ts)\,\d s %\\ &
\leq\begin{cases} \ds\int_{0}^{1} s^{\la} \,\d s &\text{if}\quad t\leq0,\\
	\ds\e^t\int_{0}^{1} s^{\la}\,\d s,\quad &\text{if}\quad t>0,\end{cases}\\
		\end{align*}
		and, since $s^{\la}\in L^1(0,1)$ when $\la>-1$, the first integral in \eqref{moo1} is finite for $\la>-1$.
		For $-1<\lambda\leq0$, we have that $s^{\la}\leq1$ for $s\geq 1$ and hence \begin{align*} \int_{1}^\infty s^{\la} \exp(ts-s^m)\,\d s &\leq	\int_{1}^\infty \exp(ts-s^m)<\infty.
		\end{align*}
		For $\la > 0$, %the integrand $w(x;t)=x^{\lambda}\exp(-x^2+tx)$ is continuous on $[0,\infty)$ and hence it is integrable on the compact set $[0,K]$ for any $K>0$ when $\la > 0$. 
		note that $\ds\lim_{s\rightarrow \infty} s^{2}w(s;t,m)=0$, where $w(s;t,m)=s^{\lambda}\exp(ts-s^m)$, hence, by definition, there exists $N>0$ such that $s^2w(s;t.m)<1$ whenever $s>N$.
		Since $\ds \int_{N}^{\infty}\dfrac{\d{s}}{s^2}<\infty$, it follows from the Comparison Theorem that $\ds \int_{N}^{\infty}w(s;t,m)\,\d{s}<\infty$. Finally, for $\la>0$, \begin{align*}
\int_{1}^N s^{\la} \exp(ts-s^m)\,\d s &\leq N^{\la} \int_{1}^{N} \exp(ts-s^m)\d s\\&\leq N^{\la}(N-1)\exp(tN-1)<\infty.
		\end{align*} Hence $\mu_0(t;\la,m)<\infty$ for $\lambda>-1$.
	\end{proof}}
	\begin{theorem} \label{hypex}
		Let $x\in \R, ~\la>-1,~ t\in \R$ and $m=2,3,\dots$. Then, for the generalised higher order Freud weight \eqref{freudg},
		the first moment is given by
		 \begin{align*}\mu_0(t;\la,m) &=\int_{-\infty}^{\infty} |x|^{2\la+1}\exp(tx^2-x^{2m})\,\d x = \int_0^\infty s^{\la}\exp(ts-s^m)\, \d s \nonumber\\
&= \ds \frac{1}{m}\sum_{k=1}^m\frac{t^{k-1}}{{(k-1)}!} \Gamma\left(\frac{\la+ k}{m}\right) \;\HyperpFq{2}{m}\left(\frac{\la+ k}{m},1;\frac km,\frac{k+1}{m},\dots,\frac{m+k-1}{m};\left(\frac{t}{m}\right)^m\right)\\
	&=\ds \frac{1}{m} \Gamma\left(\frac{\la+1}{m}\right) \;\HyperpFq{1}{m-1}\left(\frac{\la+1}{m};\frac 1m,\dots\frac{m-1}{m};\left(\frac{t}{m}\right)^m\right)\\
&\qquad+\ds \frac{1}{m} \sum_{k=2}^{m-1}\frac{t^{k-1}}{{(k-1)}!} \Gamma\left(\frac{\la+ k}{m}\right)\nonumber\\ &\qquad\qquad\times\HyperpFq{1}{m-1}\left(\frac{\la+ k}{m};\frac km,\frac{k+1}{m},\dots,\frac{m-1}{m},\frac{m+1}{m},\dots,\frac{m+k-1}{m};\left(\frac{t}{m}\right)^m\right)\nonumber \\
&\qquad+\ds \frac{t^{m-1}}{m!} \Gamma\left(\frac{\la}{m}+1\right) \;\HyperpFq{1}{m}\left(\frac{\la}{m}+1;\frac{m+1}{m},\frac{m+2}{m},\dots,\frac{2m-1}{m};\left(\frac{t}{m}\right)^m\right),\nonumber
\end{align*}
		where $\HyperpFq pq(a;b;z)$ is the generalised hypergeometric function (cf.~\cite[eq.\ 16.2.1]{DLMF}. 
	\end{theorem}\begin{proof}
		Using the power series expansion of the exponential function, we obtain
		\[ \begin{split}\mu_0(t;\la,m) &=\int_{-\infty}^{\infty} |x|^{2\la+1}\exp(tx^2-x^{2m})\,\d x = \int_0^\infty s^{\la}\exp(ts-s^m)\, \d s \\%[10pt]	
&= \ds \int_0^\infty s^{\la}\exp(-s^m)\sum_{n=0}^{\infty}\frac{(ts)^n}{n!}\, \d s\\
&= \ds \sum_{n=0}^{\infty}\frac{t^n}{n!}\int_0^\infty s^{n+\la}\exp(-s^m)\, \d s\\
%&= \ds \frac 1m\sum_{n=0}^{\infty}\frac{t^n}{n!}\int_0^\infty s^{n+\la-m+1}\exp(-s^m)ms^{m-1}\, \d s\\
&= \ds\frac 1m \sum_{n=0}^{\infty}\frac{t^n}{n!} \int_0^\infty y^{(n+\la-m+1)/m}\exp(-y)\, \d y\\
&= \ds \frac 1m\sum_{n=0}^{\infty} \frac{t^n}{n!}\,\Gamma \left(\frac{\la+n+1}{m}\right),
		\end{split}\] where $\Gamma(x)$ denotes the Gamma function defined in \cite[eq.\ 5.2.1]{DLMF} and the fourth equal sign follows from the Lebesgue's Dominated Convergence Theorem. 
		Letting $n=mk+j$ for $j=0,1,\dots,m-1$, we can write 
		\[ \mu_0(t;\la,m) =\ds \frac 1m\sum_{k=0}^{\infty}\sum_{j=0}^{m-1}\Gamma\left(\frac{\la+j+1}{m}+k\right)\;\frac{t^{mk+j}}{(mk+j)!}.\] Using the Gauss multiplication formula \cite[eq.\ 5.5.6]{DLMF} yields 
		\[(mk+j)!=j!\,m^{mk} \prod_{\ell=1}^{m}\left(\frac{j+\ell}{m}\right)_k\]
		%\left(\frac{j+1}{m}\right)_k\left(\frac{j+2}{m}\right)_k...\left(\frac{j+m}{m}\right)_k, \]
	where $(a)_k$ denotes the Pochhammer symbol (cf.~\cite[\S5.2(iii)]{DLMF},	while it follows from \cite[eq.\ 5.5.1]{DLMF} that
		\[\Gamma\left(\frac{\la+j+1}{m}+k\right)=\left(\frac{\la+j+1}{m}\right)_k\Gamma\left(\frac{\la+j+1}{m}\right),\] 
		and hence we have
		\[\begin{split} \mu_0(t;\la,m) &=\ds \frac 1m\sum_{k=0}^{\infty}\sum_{j=0}^{m-1}\frac{\left(\frac{\la+j+1}{m}\right)_k\Gamma\left(\frac{\la+j+1}{m}\right)}{m^{mk}\left(\frac{j+1}{m}\right)_k\left(\frac{j+2}{m}\right)_k...\left(\frac{j+m}{m}\right)_k}\,\frac{t^{mk+j}}{j!}\\
&=\ds \frac 1m\sum_{j=0}^{m-1}\Gamma\left(\frac{\la+j+1}{m}\right)\frac{t^j}{j!}\sum_{k=0}^{\infty}\frac{\left(\frac{\la+j+1}{m}\right)_k}{\left(\frac{j+1}{m}\right)_k\left(\frac{j+2}{m}\right)_k...\left(\frac{j+m}{m}\right)_k}\left(\frac{t}{m}\right)^{mk}\\
&= \ds \frac 1m\sum_{j=0}^{m-1}\Gamma\left(\frac{\la+j+1}{m}\right)\frac{t^j}{j!}\;\HyperpFq{2}{m}\left(\frac{\la+j+1}{m},1;\frac{j+1}{m},\frac{j+2}{m},\dots,\frac{m+j}{m};\left(\frac{t}{m}\right)^m\right),
		\end{split}\]
as required.
	\end{proof}
\begin{remark}
	In our earlier studies of semi-classical orthogonal polynomials, we proved special cases of Theorem \ref{hypex} and Theorem \ref{momentDE}, namely for $m=2$ in \cite{refCJK} and for $m=3,4,5$ in \cite{refCJ21b}. 
\end{remark}
{In the following theorem we derive a differential equation satisfied by the first moment $\mu_0(t;\la,m)$. It is often much easier to derive properties of a function from the  differential equation it satisfies rather than from an integral representation or, as this case, a sum of generalised hypergeometric functions.}
	\begin{theorem}\label{momentDE} Let $x\in \R, ~\la>-1,~ t\in \R$ and $m=2,3,\dots$\ . Then, for the generalised higher order Freud weight \eqref{freudg},
		the first moment 
		\[\mu_0(t;\la,m) =\int_{-\infty}^{\infty} |x|^{2\la+1}\exp(tx^2-x^{2m})\,\d x = \int_0^\infty s^{\la}\exp(ts-s^m)\, \d s, \]
		satisfies the ordinary differential equation
		\beq\label{eq6} m\deriv[m]{\vph}{t} - t \deriv{\vph}{t} - (\la+1)\,\vph=0. \eeq
	\end{theorem}
	\begin{proof}
		Following \cite{refMul77} and \cite{refCJ21b}, we look for a solution of \eqref{eq6} in the form
		\beq \label{eq7}\vph(t)=\int_0^\infty \e^{st} \,v(s)\,\d s.\eeq
		In order for \eqref{eq7} to satisfy \eqref{eq6}, it is necessary that
		\[\deriv[m]{\vph}{t}-{\frac{t}m}\deriv{\vph}{t}-\frac{\la+1}{m}\vph=\int_0^\infty \e^{st} \left(s^m-{\frac{ts}{m}}-\frac{\la+1}{m}\right)v(s) \,\d s=0.\]
		Using integration by parts, this is equivalent to
		\[\int_0^\infty \e^{st} \left\{ s^mv(s) +\frac1m v(s)+\frac{s}m \deriv{v}{s}-\frac{\la+1}{m}v(s)\right\}\d s=0,\] 
		under the assumption that $\lim_{s\to\infty} sv(s)\e^{st}=0$.
		Hence, for $\vph(t)$ to be a solution of \eqref{eq6}, we need to choose $v(s)$ so that 
		%\[ \left(s^m +\frac1m -\frac{\la+1}{m}\right)v(s)+\frac{s}m \deriv{v}{s}=0\] or
		\[	 (ms^m-\la)v(s)+s\deriv{v}{s}=0 .\] One solution of this equation is $v(s) = s^{\la}\exp(-s^m)$.\end{proof}

For the generalised higher order Freud weight \eqref{freudg}, the even moments can be written in terms of derivatives of the first moment, as follows
\begin{align} \label{momd}\mu_{2k}(t;\la,m)&= \imp x^{2k} |x|^{2\la+1} \exp(tx^2-x^{2m})\,\d x \nonumber\\
	&= \deriv[k]{}{t} \imp |x|^{2\la+1}\exp(tx^2-x^{2m})\,\d x\nonumber \\&=\deriv[k]{}{t}\mu_0(t;\la,m),\qquad k=0,1,2,\ldots\ ,\end{align}
where the interchange of integration and differentiation is justified by Lebesgue's Dominated Convergence Theorem.
Furthermore, from the definition we have %we can write 
\begin{align} \label{momd2}\mu_{2k+2}(t;\la,m)&= 
%\imp x^{2k+2} |x|^{2\la+1} \exp(tx^2-x^{2m})\,\d x \nonumber\\
%&= \imp x^{2k}|x|^{2\la+3}\exp(tx^2-x^{2m})\,\d x\nonumber \\&=
\mu_{2k}(t;\la+1,m),\qquad k=0,1,2,\ldots\ , \end{align}
{and therefore
\[ \mu_{2k+2}(t;\la,m) = \mu_0(t;\la+k+1,m),\qquad k=0,1,2,\ldots\ , \]
which illustrates the importance of the first moment.}

\section{Recurrence coefficients for generalised higher order Freud weights}\label{sec:rcoef}

\begin{theorem}For the generalised higher order Freud weight \eqref{freudg}, the recurrence coefficient $\b_{n}$ is given by
	\beq \b_{2n} = \deriv{}{t} \ln \frac{\B_{n}}{\A_{n}},\qquad
	\b_{2n+1}= \deriv{}{t} \ln\frac{\A_{n+1}}{\B_{n}}. \label{def:betant}
	\eeq with $A_0=B_0=1$ and \beq \A_{n} =\Wr\left(\mu_0,\deriv{\mu_0}{t},\ldots,\deriv[n-1]{\mu_0}{t} \right), %=\det\left[\deriv[j+k]{\mu_0}{t} \right]_{j,k=0}^{n-1}
	\qquad \B_{n} =\Wr\left(\deriv{\mu_0}{t},\deriv[2]{\mu_0}{t},\ldots,\deriv[n]{\mu_0}{t} \right),%=\det\left[\deriv[j+k+1]{\mu_0}{t} \right]_{j,k=0}^{n-1} 
	\label{def:AnBnW}\eeq
	 where
	\[\mu_0= \mu_0(t;\la,m) =\int_{0}^{\infty} x^{\la}\exp(ts-s^m)\,\d{x},\]
	and $\Wr(\ph_1,\ph_2,\ldots,\ph_{n})$ denotes the Wronskian given by
	$$\Wr(\ph_1,\ph_2,\ldots,\ph_{n})= \left|\begin{matrix} 
		\ph_1 & \ph_2 & \ldots & \ph_{n}\\
		\ph_1^{(1)} & \ph_2^{(1)} & \ldots & \ph_{n}^{(1)}\\
		\vdots & \vdots & \ddots & \vdots \\
		\ph_1^{(n-1)} & \ph_2^{(n-1)} & \ldots & \ph_{n}^{(n-1)}
	\end{matrix}\right|,\qquad \ph_j^{(k)}=\deriv[k]{\ph_j}{t}.$$
	 
\end{theorem}
\begin{proof} It follows from \eqref{def:AnBn} and \eqref{momd} that $\A_{n}$ and $\B_{n}$ can be written in terms of the Wronskians given by \eqref{def:AnBnW}. Furthermore, \begin{align}\label{eq:dodgson} &\A_{n}\deriv{\B_{n}}{t}-\B_{n}\deriv{\A_{n}}{t}=\A_{n+1}\B_{n-1},\qquad
		\B_{n}\deriv{\A_{n+1}}{t}-\A_{n+1}\deriv{\B_{n}}{t}=\A_{n+1}\B_{n}\end{align} (cf.~\cite[\S6.5.1]{refVeinDale}) and \eqref{eq:dodgson}, together with \eqref{def:betan} yields \eqref{def:betant}.
\end{proof}

\begin{theorem} Let $\w_0(x)$ be a symmetric positive weight on the real line for which all the moments exist and let $\w(x;t)=\exp(tx^2)\,\w_0(x)$, with $t\in\R$, is a weight such that all the moments of exist. Then the recurrence coefficient $\b_{n}(t)$ satisfies the Volterra, or the Langmuir lattice, equation
	\beq \nonumber\deriv{\b_{n}}{t} = \b_{n}(\b_{n+1}-\b_{n-1}). \eeq
\end{theorem}

\begin{proof} {See, for example, Van Assche \cite[Theorem 2.4]{refWVAbk}.}
\end{proof}

\begin{theorem} For the generalised higher order Freud weight \eqref{freudg}, the associated monic polynomials $P_{n}(x)$ satisfy the recurrence relation %\eqref{eq:3rr} 
	\beq \label{eq:3rr} P_{n+1}(x) = xP_{n}(x) - \b_{n}(t;\la) P_{n-1}(x),\qquad n=0,1,2,\ldots\ ,\eeq
	with $P_{-1}(x)=0$ and $P_0(x)=1$, where
 \begin{align*} \b_{2n}(t;\la) &%= \frac{\Delta_{2n+1}\Delta_{2n-1}}{\Delta^2_{2n}}= \frac{\A_{n+1}\A_{n}\B_{n}\B_{n-1}}{\A_{n}^2\b_{n}^2}
= \frac{\A_{n+1}(t;\la)\A_{n-1}(t;\la+1)}{\A_{n}(t;\la)\A_{n}(t;\la+1)}=\deriv{}{t}\ln \frac{\A_{n}(t;\la+1)}{\A_{n}(t;\la)},\\
\b_{2n+1}(t;\la)&%= \frac{\Delta_{2n+2}\Delta_{2n}}{\Delta^2_{2n+1}}= \frac{\A_{n+1}\A_{n}\B_{n+1}\B_{n}}{\A_{n+1}^2\B_{n}^2}
= \frac{\A_{n}(t;\la)\A_{n+1}(t;\la+1)}{\A_{n+1}(t;\la)\A_{n}(t;\la+1)}=\deriv{}{t}\ln \frac{\A_{n+1}(t;\la)}{\A_{n}(t;\la+1)}. 
	\end{align*} 
	where $\A_{n}(t;\la)$ is the Wronskian given by \eqref{def:AnBnW}
	%\[\A_{n}(t;\la)=\Wr\left(\mu_0,\deriv{\mu_0}{t},\ldots,\deriv[n-1]{\mu_0}{t} \right),\]
	%\det\big[\mu_{2j+2k} (t;\la)\big]_{j,k=0}^{n-1}=\det\left[\deriv[j+k]{\mu_0}{t} \right]_{j,k=0}^{n-1}\]
	with 
	\begin{align*}
	\mu_0(t;\la,m) %&=\int_{-\infty}^{\infty} |x|^{2\la+1}\exp\left(tx^2-x^{2m}\right)\,\d x \\%[10pt]
&= \ds \frac{1}{m}\sum_{k=1}^m\frac{t^{k-1}}{{(k-1)}!} \Gamma\left(\frac{\la+ k}{m}\right) \;\HyperpFq{2}{m}\left(\frac{\la+ k}{m},1;\frac km,\frac{k+1}{m},\dots,\frac{m+k-1}{m};\left(\frac{t}{m}\right)^m\right).
	\end{align*}
\end{theorem}
\begin{proof}
	It follows from substituting \eqref{momd2} into the expression for $\B_{n}(t;\la)$ given in \eqref{def:AnBnW} that $\B_{n}=\A_{n}(t;\la+1)$ and then the result immediately follows from \eqref{def:betan} and \eqref{def:betant}. \end{proof}

\subsection{Nonlinear recursive relations}
We follow the approach found in \cite[\S7]{refMaroni2} whose key results are summarised in \cite[Proposition 3.1]{Mar99}.

Note that for a given $m\geq 1$, we can write 
\begin{equation}\label{x2mPn}
	x^{2m} P_{n}(x) 
	= \sum_{\ell=-m}^{m} C^{(2m)}_{n,n+2 \ell} P_{n+2 \ell},
\end{equation}
where 
\[
	C^{(2m)}_{n,n+2 \ell} = \frac{1}{h_{n+2 \ell}} \int_{-\infty}^\infty x^{2m} P_{n+2 \ell}(x)P_{n}(x)\,\w(x)\,\d x
	\quad \text{for} \quad \ell = -m, \ldots, m. 
\]
Observe that 
\(
C^{(2m)}_{n,n+k} = C^{(2m)}_{n+k,n}=0 \) for \( |k|\geq 2m+1 \) 
and \[
 C^{(2m)}_{n,n+2\ell} =\frac{h_{n}}{h_{n+2\ell}}C^{(2m)}_{n+2\ell,n}
 = \frac{1}{\beta_{n+1}\cdots \beta_{n+2\ell}} C^{(2m)}_{n+2\ell,n} \, \quad \text{for} \quad \ell=1,\ldots, m . 
\]
%\[ C^{(2m)}_{n+2\ell,n} 
% = \beta_{n+1}\cdots \beta_{n+2\ell} \, C^{(2m)}_{n,n+2\ell} \quad \text{for} \quad \ell=1,\ldots, m . 
%\]
From the recurrence relation \eqref{eq:srr} it follows 
\begin{equation}\label{x2Pn}
	x^2P_{n}(x) =
 P_ {n+2} + \left(\b_{n}+\b_ {n+1}\right) P_{n}+\b_ {n-1} \b_{n} P_ {n-2}, \quad n\geq 0. 
\end{equation}
 In particular, one has $C_{n,n}^{(2)}=\beta_{n}+\beta_{n+1}$, $C_{n,n-2}^{(2)}=\beta_{n-1}\beta_{n}$ and $C_{n,n+2}^{(2)} = 1$. The computation of the coefficients \( \big\{C_{n-2\ell,n}^{(2m+2)}\big\}_{\ell=0}^{m+1}\) can be derived from the coefficients \( \big\{C_{n-2\ell,n}^{(2m)}\big\}_{\ell=0}^{m}\) as follows 
\begin{equation}\label{Ckn2m}
	C_{n-2\ell,n}^{(2m+2)} 
	= \beta_{n+2}\beta_{n+1} C_{n-2\ell,n+2}^{(2m)} + (\beta_{n}+\beta_{n+1}) C_{n-2\ell,n}^{(2m)} 
	+ C_{n-2\ell,n-2}^{(2m)} , \quad \ell= 0, \ldots , m, 
\end{equation}
which is a direct consequence of \eqref{x2mPn} multiplied by \(x^2\) and \eqref{x2Pn}.

\begin{proposition}\label{lm34} The recurrence coefficient $\b_{n}$ for the generalised higher-order Freud weight \eqref{freudg}
satisfies the discrete equation
\begin{equation}\label{Vn2m}
	2m V_{n\phantom{1}}^{(2m)} - 2t \beta_{n} = n+ (\la +\tfrac12) [1-(-1)^n]. 
\end{equation}
where 
%\begin{equation}
%	V_{n\phantom{1}}^{(2m)}=\beta_{n} \left( \beta_{n-1} C_{n-2,n}^{(2m-2)}+ C_{n,n}^{(2m-2)}\right) . 
%\end{equation}
\begin{equation} \label{Vn2m exp}
	V_{n\phantom{1}}^{(2m)}= C_{n,n-2}^{(2m-2)}+\beta_{n}C_{n,n}^{(2m-2)}.
 %\frac{1}{h_{n-2}} \int_{-\infty}^\infty x^{2m-2} P_{n-2}(x)P_{n}(x)\,\w(x)\,\d x 
%			+\frac{\beta_{n}}{h_{n}} \int_{-\infty}^\infty x^{2m-2} P^2_{n}(x)\,\w(x)\,\d x 
\end{equation}
\end{proposition}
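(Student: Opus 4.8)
The plan is to derive \eqref{Vn2m} from a single integration-by-parts identity against the weight \eqref{freudg}, exploiting its logarithmic derivative
\[
\frac{\w'(x;t,\la)}{\w(x;t,\la)}=\frac{2\la+1}{x}+2tx-2mx^{2m-1},
\]
whose three summands will be responsible, respectively, for the parity-dependent term $(\la+\tfrac12)[1-(-1)^n]$, for $2t\b_{n}$, and for $2mV_{n}^{(2m)}$.

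First I would record, for $n\ge1$, the identity
\[
\int_{-\infty}^{\infty}\deriv{}{x}\big[P_{n}(x)P_{n-1}(x)\,\w(x;t,\la)\big]\,\dx=0 .
\]
Since $P_{n}P_{n-1}$ is an odd polynomial, the product $P_{n}P_{n-1}\,\w$ behaves near $x=0$ like $\sgn(x)\,|x|^{2\la+2}$, hence is absolutely continuous on $\R$ for every $\la>-1$, and it decays superexponentially at $\pm\infty$; this justifies the vanishing of the integral (one may split it at $x=0$ to avoid the mild non-smoothness of $|x|^{2\la+1}$ there). Performing the differentiation gives
\[
0=\int_{-\infty}^{\infty}\Big[P_{n}'P_{n-1}+P_{n}P_{n-1}'+P_{n}P_{n-1}\,\w'/\w\Big]\w\,\dx .
\]

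Next I would evaluate the three resulting integrals. Because $P_{n}$ is monic, $P_{n}'-nP_{n-1}$ has degree at most $n-2$, so $\int P_{n}'P_{n-1}\w\,\dx=nh_{n-1}$, and $\int P_{n}P_{n-1}'\w\,\dx=0$ since $\deg P_{n-1}'<n$. Writing $I_{j}:=\int_{-\infty}^{\infty}x^{j}P_{n}(x)P_{n-1}(x)\,\w(x;t,\la)\,\dx$, the third integral equals $(2\la+1)I_{-1}+2t\,I_{1}-2m\,I_{2m-1}$, and I would compute each $I_{j}$ using $xP_{n-1}=P_{n}+\b_{n-1}P_{n-2}$ from \eqref{eq:srr}. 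For $I_{-1}$ one distinguishes the parity of $n$: if $n$ is odd then $P_{n}(x)/x$ is a monic polynomial of degree $n-1$, hence $P_{n-1}$ plus lower-degree terms, so $I_{-1}=h_{n-1}$; if $n$ is even then $P_{n-1}(x)/x$ has degree $n-2<n$, so $I_{-1}=0$; thus $I_{-1}=\tfrac12[1-(-1)^n]h_{n-1}$. Next, $I_{1}=\int P_{n}\big(P_{n}+\b_{n-1}P_{n-2}\big)\w\,\dx=h_{n}$. Finally, $I_{2m-1}=\int x^{2m-2}\big(P_{n}+\b_{n-1}P_{n-2}\big)P_{n}\,\w\,\dx=C_{n,n}^{(2m-2)}h_{n}+\b_{n-1}C_{n,n-2}^{(2m-2)}h_{n-2}$, by the defining relation \eqref{x2mPn} of the connection coefficients.

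Finally I would normalise. Using $h_{n}=\b_{n}h_{n-1}$ and $h_{n-1}=\b_{n-1}h_{n-2}$, one obtains $I_{2m-1}=h_{n-1}\big(\b_{n}C_{n,n}^{(2m-2)}+C_{n,n-2}^{(2m-2)}\big)=h_{n-1}V_{n}^{(2m)}$ by \eqref{Vn2m exp}, and $I_{1}=\b_{n}h_{n-1}$. Substituting everything back, dividing through by $h_{n-1}>0$, and rearranging produces exactly \eqref{Vn2m} (after which $V_{n}^{(2m)}$ may, if one wishes, be re-expressed through $C_{n,n}^{(2m)}$ via \eqref{Ckn2m} with $\ell=0$). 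The only genuinely delicate step is the evaluation of $I_{-1}$: one must track the parity of $n$ to generate the factor $1-(-1)^n$, and observe that the superficially singular integrand $x^{-1}P_{n}P_{n-1}\w$ is in fact perfectly integrable for every $\la>-1$, precisely because the odd polynomial $P_{n}P_{n-1}$ cancels the pole at $x=0$. Everything else is routine bookkeeping with \eqref{eq:srr} and the definitions \eqref{x2mPn}--\eqref{x2Pn}.
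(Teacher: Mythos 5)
Your proof is correct, and it takes a genuinely different route from the paper's. The paper works with the structure relation $x P_{n}'(x)=\sum_{\ell}\rho_{n,2\ell}P_{n-2\ell}(x)$, computes the coefficients $\rho_{n,2\ell}$ twice --- once by integration by parts against the Pearson equation \eqref{w diff eq} (system \eqref{rhosys1}) and once purely algebraically by differentiating the recurrence (system \eqref{rhosys2}) --- and equates the two at $\ell=0$. That yields the symmetrised identity $mV_{n+1}^{(2m)}+mV_{n}^{(2m)}-t(\beta_{n}+\beta_{n+1})=n+\lambda+1$, from which \eqref{Vn2m} follows only after de-telescoping, i.e.\ after checking one initial value (e.g.\ $V_0^{(2m)}=0$ with the convention $\beta_0=0$, $P_{-1}=0$) so that the alternating homogeneous solution is excluded; the paper passes over this last step in silence. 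Your argument instead applies the classical Freud--Shohat device $\int_{\R}\deriv{}{x}\bigl[P_{n}(x)P_{n-1}(x)\,\w(x)\bigr]\dx=0$: because you pair $P_{n}$ with $P_{n-1}$ rather than with itself, the term $(2\lambda+1)I_{-1}$ produces the parity factor $[1-(-1)^n]$ directly and you obtain \eqref{Vn2m} for each individual $n$ with no telescoping needed. The price is having to justify the vanishing of the boundary contribution across the mild singularity at the origin, which you do correctly ($P_{n}P_{n-1}$ is odd, so $P_{n}P_{n-1}\w\sim \sgn(x)\,|x|^{2\lambda+2}\to 0$ as $x\to 0$ for $\lambda>-1$). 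Your identifications $I_{1}=h_{n}=\beta_{n}h_{n-1}$ and $I_{2m-1}=h_{n-1}\bigl(\beta_{n}C_{n,n}^{(2m-2)}+C_{n,n-2}^{(2m-2)}\bigr)=h_{n-1}V_{n}^{(2m)}$ agree with the definition of the connection coefficients in \eqref{x2mPn} and with \eqref{Vn2m exp}, so the bookkeeping is sound. The one thing the paper's longer route buys is that the intermediate systems \eqref{rhosys1}--\eqref{rhosys2} are reused afterwards to generate the recursive formulae \eqref{Vn2m exp1}--\eqref{Vn2m exp2} for $V_{n}^{(2m)}$; your argument proves the proposition more economically but does not produce those by-products.
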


Alternatively, \eqref{Vn2m exp} can be written as
\[
	V_{n\phantom{1}}^{(2m)}= 
 \frac{1}{h_{n-2}} \int_{-\infty}^\infty x^{2m-2} P_{n-2}(x)P_{n}(x)\,\w(x)\,\d x 
			+\frac{\beta_{n}}{h_{n}} \int_{-\infty}^\infty x^{2m-2} P^2_{n}(x)\,\w(x)\,\d x .
\]

\begin{proof}
For any monic polynomial sequence $\big\{P_{n}(x)\big\}_{n\geq0}$, one can always write 
\[
 x \deriv{P_{n}}{x}(x)= \sum_{j=0}^{n} \rho_{n,j} P_{n-j}(x), \quad \text{for} \quad n\geq 1, 
\]
with $\rho_{n,0}=n$.
The assumption that $\big\{P_{n}(x)\big\}_{n\geq0}$ is orthogonal with respect to the semiclassical weight $\w(x)$ satisfying the differential equation \eqref{eq:Pearson} with $\sigma(x)=x$ and $\tau(x)=2(tx^2-mx^{2m}+\la+1)$ gives, using integration by parts, 
\begin{align*}
 \rho_{n,j} h_{n-j}&= \int_{-\infty}^\infty x \deriv{P_{n}}{x}(x) P_{n-j}(x)\,\w(x)\,\d x\\
 %&= \Big[xP_{n-j}(x)P_{n}(x)\,\w(x)\Big]_{-\infty}^{\infty}-\int_{-\infty}^{\infty}\deriv{}{x}\left\{xP_{n-j}(x)\,\w(x)\right\}P_{n}(x)\,\d x\\
 &= - \int_{-\infty}^{\infty} \left\{ \tau(x)P_{n-j}(x) + x \deriv{P_{n-j}}{x}(x) \right\}
 P_{n}(x)\w(x)\,\d x,
\end{align*} where $\ds h_k=\int_{-\infty}^{\infty}P_k^2(x)\,\w(x)\,\d x>0.$
Therefore 
$\rho_{n,j} =0 $ for any $j\geq 2m+1$ and 
the 
symmetry of the weight implies \(\rho_{n,j}=0\) for any \(j\) odd. Therefore we have 
\begin{equation}\label{eq: struct Pn}
 x \deriv{P_{n}}{x}(x) = \sum_{\ell=0}^{m} \rho_{n,2\ell}\, P_{n-2\ell}(x), \quad \text{for} \quad n\geq 0.
\end{equation}
Recall \eqref{x2Pn} to write 
\[ 
 \frac{1}{h_{n}}\int_{-\infty}^\infty x^2 P^2_{n}(x)\,\w(x)\,\d x = (\beta_{n} + \beta_{n+1}) 
\quad \text{and} \quad 
 \frac{1}{h_{n-2}}\int_{-\infty}^\infty x^2 P_{n-2}(x)P_{n}(x)\,\w(x)\,\d x = \beta_{n} \beta_{n-1}, \]
and hence 
\begin{equation}\label{rhosys1}
\rho_{n,2\ell} = 
\begin{cases}
 \displaystyle 
 \frac{2m}{h_{n}} \int_{-\infty}^\infty x^{2m} P^2_{n}(x)\,\w(x)\,\d x 
 - 2t (\beta_{n} + \beta_{n+1})
 - \left(2\la+2 +n\right),
 & \text{if} \quad \ell=0, \\[0.25cm]
 \displaystyle \frac{2m}{h_{n-2}} \int_{-\infty}^\infty x^{2m} P_{n-2}(x) P_{n}(x)\,\w(x)\,\d x
 - 2t \, \beta_{n}\beta_{n-1},
 & \text{if} \quad \ell=1, \\[0.25cm]
 \displaystyle\frac{2m}{h_{n-2\ell}} \displaystyle \int_{-\infty}^\infty x^{2m} P_{n-2\ell}(x)P_{n}(x)\,\w(x)\,\d x,
 & \text{if} \quad 2 \leq \ell \leq m-1, \\[0.25cm]
 \displaystyle {2m} \, \beta_{n} \cdots \beta_{n-2m+1},
 & \text{if} \quad \ell=m, \\[0.25cm]
 0, & \text{otherwise}. 
\end{cases}
\end{equation}

Take \(\ell=0\) in \eqref{Ckn2m} and note that $C_{n,n-2}^{(2m-2)} = C_{n-2,n}^{(2m-2)} \beta_{n}\beta_{n-1}$ to get 
\[
	C_{n,n}^{(2m)} 
	= \beta_{n+1} \left( C_{n,n+2}^{(2m-2)} \beta_{n+2} + C_{n,n}^{(2m-2)} \right)
	+ \beta_{n} \left( C_{n-2,n}^{(2m-2)} \beta_{n-1} + C_{n,n}^{(2m-2)} \right) . 
\]
The symmetric orthogonality recurrence relation \eqref{eq:srr} implies that 
\[
	P_{n+2}(x)P_{n}(x) 
	= P_{n+1}^2(x)+ \beta_{n} P_{n-1}(x) P_{n+1}(x) - \beta_{n+1} P_{n}^2(x),
\]
which gives the relation 
\begin{equation} \label{rel cns 2m}
	C_{n,n+2}^{(2m-2)} \beta_{n+2} + C_{n,n}^{(2m-2)} 
	= C_{n-1,n+1}^{(2m-2)} \beta_{n} + C_{n+1,n+1}^{(2m-2)} , 
\end{equation}
and consequently we have 
\begin{equation} \label{cnn2m}
	C_{n,n}^{(2m)} 
	= V_{n+1}^{(2m)} + V_{n\phantom{1}}^{(2m)}
\quad \text{where}\quad 
	V_{n\phantom{1}}^{(2m)}=\beta_{n} \left( \beta_{n-1} C_{n-2,n}^{(2m-2)}+ C_{n,n}^{(2m-2)}\right).
\end{equation}
 
On the other hand, expressions for the coefficients \(\rho_{n,2j}\) can be obtained through a purely algebraic way, and therefore expressed recursively. For that, we differentiate with respect to $x$ the recurrence relation \eqref{eq:srr} and use the structure relation \eqref{eq: struct Pn} to get 
\[
 P_{n}(x) + \sum_{\ell=0}^{m} \rho_{n,2\ell} P_{n-2\ell}(x) 
 = \deriv{P_{n+1}}{x}(x) + \b_{n} \deriv{P_{n-1}}{x}(x). 
\]
We multiply the latter by $x$ and use again \eqref{eq: struct Pn} and then \eqref{eq:srr} to obtain a linear combination of terms of $\big\{P_{n}(x)\big\}_{n\geq0}$ and this gives 
\[
\begin{aligned}
 P_{n+1}(x) &+ \b_{n} P_{n-1}(x)
 = \sum_{\ell=0}^{m+1} \left( \rho_{n+1,2\ell}- \rho_{n,2\ell}
 +\b_{n} \, \rho_{n-1,2\ell-2}- \b_{n-2\ell+2}\,\rho_{n,2\ell-2}\right) P_{n-2\ell +1}(x). 
\end{aligned}
\]
Since the terms are linearly independent, we equate the coefficients of $P_{n+1}, P_{n}, \ldots , P_{n-2m-1}$ to get 
\begin{equation}\label{rhosys2}
\begin{cases}
 \rho_{n,0} = n, \\
 \rho_{n+1,2}- \rho_{n,2} = 2 \b_{n}, \\
 \rho_{n+1,2\ell}- \rho_{n,2\ell}
 = \b_{n-2\ell+2}\,\rho_{n,2\ell-2}- \b_{n} \, \rho_{n-1,2\ell-2}, 
 & \text{for}\quad \ell=2, \ldots , m-1,\\
 \b_{n-2m}\,\rho_{n,2m} = \b_{n} \,\rho_{n-1,2m}, &\text{for}\quad j=m-1. 
\end{cases}
\end{equation}

We combine \eqref{rhosys1} with \eqref{rhosys2} to conclude that the first equation (when \(\ell=0\)) gives 
\begin{equation*}
	m V_{n+1}^{(2m)} + 	m V_{n\phantom{1}}^{(2m)} 
 - t (\beta_{n} + \beta_{n+1})
 = n + \left(\la+1 \right) , 
\end{equation*} 
which implies \eqref{Vn2m}. 
\end{proof} 

The expressions for $V_{n\phantom{1}}^{(2m)}$ can be then obtained recursively using \eqref{Vn2m exp}, \eqref{Ckn2m} 
and \eqref{cnn2m} to write
\begin{align}
 V_{n\phantom{1}}^{(2m)} & = \beta_{n} \left( V^{(2 m -2)}_ {n+1}+V^{(2 m -2)}_{n}\right)
 +\left( \beta_{n}+\beta_ {n+1}\right) V^{(2 m -2)}_{n}\nonumber\\
 &\qquad
 -\beta_{n} \left(\beta_{n}+\beta_ {n+1}\right)
 \left( V_{n\phantom{1}}^{(2 m -4)}+ V_{ n +1}^{(2 m -4)}\right)
 +\beta_{n} \beta_ {n-1} \left( V_ {n-2}^{(2 m -4)}
 + V_{ n -1}^{(2 m -4)}\right)\nonumber\\
 &\qquad
 + \beta_{n} \beta_ {n-1} \beta_ {n+1} \beta_ {n+2}C^{(2m-4)}_{n -2,n +2}. \label{Vn2m exp1}
 \end{align}
Combining \eqref{rel cns 2m} with \eqref{cnn2m} gives 
\[ V_{n}^{(2m)} - V_{n-1\phantom{1}}^{(2m)} 
	 = \beta_{n} \left(V_{n+1}^{(2m-2)} + V_{n}^{(2m-2)} \right) - \beta_{n-1} \left( V_{n-1\phantom{1}}^{(2m-2)} + V_{n-2}^{(2m-2)} \right).
\]
Using the latter relation, we replace the term \(\left( \beta_{n}+\beta_ {n+1}\right) V^{(2 m -2)}_{n}\) in \eqref{Vn2m exp1} to get 
\begin{align}
 V_{n\phantom{1}}^{(2m)} &= \beta_{n} \left( V_{n+1}^{(2m-2)}+V_{n\phantom{1}}^{(2m-2)}+V_{n-1}^{(2m-2)}\right) + \beta_{n+1} V_{n-1\phantom{1}}^{(2m-2)} \nonumber\\
 &\qquad - \beta_{n+1}\beta_{n-1} \left( V_{n-1}^{(2m-4)}+V_{n-2\phantom{1}}^{(2m-4)}\right)+ \beta_{n+2}\beta_{n+1}\beta_{n}\beta_{n-1} C^{(2m-4)}_{n-2,n+2}.
\label{Vn2m exp2}
\end{align}
Consider \(n\rightarrow n-1\) and \(m\rightarrow m-1\) in the latter expression, 
\comment{\color{red} TO COMMENT just for our own checking which is 
\begin{align*}
 V_{n-1\phantom{1}}^{(2m-2)} = & \beta_{n-1} \left( V_{n}^{(2m-4)}+V_{n-1\phantom{1}}^{(2m-4)}+V_{n-2}^{(2m-4)}\right) 
 + \beta_{n} V_{n-2\phantom{1}}^{(2m-4)}
 - \beta_{n}\beta_{n-2} \left( V_{n-2}^{(2m-6)}+V_{n-3\phantom{1}}^{(2m-6)}\right)\\
 & + \beta_{n+1}\beta_{n}\beta_{n-1}\beta_{n-2} C^{(2m-6)}_{n-3,n+1},
\end{align*}}%
and replace it in \eqref{Vn2m exp2} and this yields 
\comment{\color{red} TO COMMENT just for our own checking
\begin{align*}
 V_{n\phantom{1}}^{(2m)} = & \beta_{n} \left( V_{n+1}^{(2m-2)}+V_{n\phantom{1}}^{(2m-2)}+V_{n-1}^{(2m-2)}\right) \\
 & 
 + \beta_{n+1} \Big( 
 \beta_{n-1} \left( V_{n}^{(2m-4)}+V_{n-1\phantom{1}}^{(2m-4)}+V_{n-2}^{(2m-4)}\right) 
 + \beta_{n} V_{n-2\phantom{1}}^{(2m-4)}
 - \beta_{n}\beta_{n-2} \left( V_{n-2}^{(2m-6)}+V_{n-3\phantom{1}}^{(2m-6)}\right)\\
 & + \beta_{n+1}\beta_{n}\beta_{n-1}\beta_{n-2} C^{(2m-6)}_{n-3,n+1}
 \Big)\\
 &
 - \beta_{n+1}\beta_{n-1} \left( V_{n-1}^{(2m-4)}+V_{n-2\phantom{1}}^{(2m-4)}\right)\\
 & + \beta_{n+2}\beta_{n+1}\beta_{n}\beta_{n-1} C^{(2m-4)}_{n-2,n+2}.
\end{align*}}%
\begin{align*}
 V_{n\phantom{1}}^{(2m)} &= \beta_{n} \left( V_{n+1}^{(2m-2)}+V_{n\phantom{1}}^{(2m-2)}+V_{n-1}^{(2m-2)}\right) 
 + 
 \beta_{n-1}\beta_{n+1} V_{n}^{(2m-4)}
 + \beta_{n} \beta_{n+1}V_{n-2\phantom{1}}^{(2m-4)}\\
 &\qquad
 - \beta_{n+1}\beta_{n}\beta_{n-2} \left( V_{n-2}^{(2m-6)}+V_{n-3\phantom{1}}^{(2m-6)}\right) %\\ &\qquad 
 + \beta_{n+1}\beta_{n}\beta_{n-1} \left( 
 \beta_{n+1}\beta_{n-2} C^{(2m-6)}_{n-3,n+1}
 + \beta_{n+2} C^{(2m-4)}_{n-2,n+2}\right).
\end{align*}
If we replace the term \(V_{n-2\phantom{1}}^{(2m-4)}\) by the corresponding expression given by the latter relation and successively continuing the process then one can deduce the following expressions for \(V_{n\phantom{1}}^{(2m)}\) as follows 
\begin{align*}
V_{n\phantom{1}}^{(2)} &= \b_{n},\\
	V_{n\phantom{1}}^{(4)} &=V_{n\phantom{1}}^{(2)}\left(V_{n+1}^{(2)}+V_{n\phantom{1}}^{(2)}+V_{n-1}^{(2)}\right),\\ 
	%&=\b_{n} \left(\b_{n+1}+\b_{n }+\b_ {n-1}\right),\\
	V_{n\phantom{1}}^{(6)} %&= \b_{n}\left( V_{n+1}^{(4)}+V_{n\phantom{1}}^{(4)} + \b_{n-1} \left(\b_{n-2}+\b_ {n-1}+\b_{n }+\b_ {n+1}\right) \right)\\ 
 &= V_{n\phantom{1}}^{(2)}\left( V_{n+1}^{(4)}+V_{n\phantom{1}}^{(4)} + V_{n-1}^{(4)} + V_{n+1}^{(2)}V_{n-1}^{(2)} \right). %\\ 
%	&=\b_{n} \big(\b_{n-2} \b_{n-1} + \b_{n-1}^2 + 2 \b_{n-1} \b_{n} + \b_{n-1} \b_{n+1} + \b_{n}^2 + 2 \b_{n}\b_{n+1} + \b_{n+1}^2 + \b_{n+1} \b_{n+2}\big).
\end{align*}
For higher orders we compute the coefficients $V_{n\phantom{1}}^{(2m)}$ recursively as stated below. We opted for not giving the expressions in terms of $\beta_n$ since those are rather long. For $m=4,5$, we have 
\begin{align*}
%= \b_{n}\Big( \b_{n+1} \left(\b_{n+2}+\b_{n+1}+\b_{n} \right) +\b_{n} \left(\b_{n+1}+\b_{n }+\b_ {n-1}\right) + \b_{n-1} \left(\b_{n-2}+\b_ {n-1}+\b_{n }+\b_ {n+1}\right) \Big)\\
%
V_{n\phantom{1}}^{(8)} &= V_{n\phantom{1}}^{(2)}\left( V_{n+1}^{(6)}+V_{n\phantom{1}}^{(6)} + V_{n-1}^{(6)} \right)+V_{n\phantom{1}}^{(4)}V_{n+1}^{(2)}V_{n-1}^{(2)} +V_{n+1}^{(2)}V_{n\phantom{1}}^{(2)}V_{n-1}^{(2)}\left(V_{n+2}^{(2)}+V_{n-2}^{(2)} \right),\\
 V_{n\phantom{1}}^{(10)}
 &= V_{n\phantom{1}}^{(2)} \left(V^{(8)}_ {n+1}+V_{n\phantom{1}}^{(8)}+V^{(8)}_ {n-1}\right)+V_{n\phantom{1}}^{(6)} V^{(2)}_ {n+1} V^{(2)}_ {n-1}
 +V^{(2)}_ {n+1} V_{n\phantom{1}}^{(2)} V^{(2)}_ {n-1} \left(V^{(4)}_{n+2}+V^{(4)}_{n-2}\right)\\
 &\qquad +V^{(2)}_{n+1} V^{(2)}_{n\phantom{1}} V^{(2)}_{n-1} \left\{\left(V^{(2)}_{n\phantom{1}}+V^{(2)}_ {n-1}\right)V^{(2)}_ {n+2}	+\left(V^{(2)}_ {n+1}+V^{(2)}_{n\phantom{1}}\right) V^{(2)}_ {n-2}+V^{(2)}_{n+2} V^{(2)}_{n-2} \right\}.
\end{align*}

\comment{\color{red} Question for Peter: I tested in Maple and the formula below does not match, mainly due to the tail part (last terms)
\begin{align*}
 V_{n\phantom{1}}^{(10)} &= V_{n\phantom{1}}^{(2)} \left(V^{(8)}_ {n+1}+V_{n\phantom{1}}^{(8)}+V^{(8)}_ {n-1}\right)+V^{(2)}_ {n+1} V^{(2)}_ {n-1}V_{n\phantom{1}}^{(6)} 
 % +V^{(2)}_ {n+1} V_{n\phantom{1}}^{(2)} V^{(2)}_ {n-1} \left(V^{(4)}_ {n-2}+V^{(4)}_ {n+2}\right)\\&\qquad 
 +V^{(2)}_ {n-1} V_{n\phantom{1}}^{(2)} V^{(2)}_ {n-1} \left(V_{n+2}^{(4)}+V_{n-2}^{(4)}+V_{n-2}^{(4)} \right)\\ &\qquad
 +V^{(2)}_ {n+1} V_{n\phantom{1}}^{(2)} V^{(2)}_ {n-1} \left(V^{(2)}_ {n+2} V^{(2)}_{n+3}+V^{(2)}_{n-2}V^{(2)}_{n -3} \right).
\end{align*}
}
\comment{\begin{align*}
	V_{n\phantom{1}}^{(8)} &= \beta_{n} \Big(\beta_ {n-1}^{3}+\left(3 \beta_{n}+2 \beta_ {n-2}+\beta_ {n+1}\right) \beta_ {n-1}^{2}\\
 &\qquad+\left(\beta_ {n+1}^{2}+\left(4 \beta_{n}+\beta_ {n-2}+\beta_ {n+2}\right) \beta_ {n+1}+3 \beta_{n}^{2}+2 \beta_ {n-2} \beta_{n}+\beta_ {n-2} \left(\beta_ {n-2}+\beta_{n -3}\right)\right) \beta_ {n-1}\\ 
 &\qquad +\left(\beta_ {n+1}^{2}+\left(3 \beta_{n}+2 \beta_ {n+2}\right) \beta_ {n+1}+3 \beta_{n}^{2}+2 \beta_{n} \beta_ {n+2}+\beta_ {n+2} \left(\beta_ {n+2}+\beta_{n +3}\right)\right) \beta_ {n+1}+\beta_{n}^{3}\Big),\\
\end{align*}}

\begin{remarks}
\begin{enumerate}[(i)]
\item[]
\item
For the case when $\la=-\tfrac12$, Proposition \ref{lm34} was proved by Benassi and Moro \cite{refBM20}, using a result in \cite{refBMX92}. Although it is straightforward to modify the proof presented therein for the case when $\la\not=-\tfrac12$, we hereby present an alternative approach purely depending on the structure relation of the semiclassical polynomials. 
\item {Equations such as \eqref{Vn2m} for recurrence relation coefficients are sometimes known as \textit{Laguerre-Freud equations} \cite{refFreud76,refLag}; see also \cite{refBelRon,refHHR,refMagnus86,refMagnus95,refMagnus99}. }
%Consider the tridiagonal symmetric matrix
%\beq \nonumber
%\mathbf{L}=\begin{pmatrix} 
%0 & b_1 & 0 & 0 & \ldots \\
%b_1 & 0 & b_2 & 0 & \ldots\\
%0 & b_2 & 0 & b_3 & \ldots \\
%\vdots & \ddots & \ddots & \ddots & \ddots
% \end{pmatrix}
% \eeq
% and then define
% \beq \label{def:Vn} V_{n\phantom{1}}^{(2m)} = b_{n} \left(\mathbf{L}^{2m-1}\right)_{n,n+1}.\eeq
% The first few $V_{n\phantom{1}}^{(2m)}$ are given by
%\begin{align*}
%V_{n\phantom{1}}^{(2)}&=\b_{n},\\
%V_{n\phantom{1}}^{(4)}&=V_{n\phantom{1}}^{(2)}\left(V_{n-1}^{(2)}+V_{n\phantom{1}}^{(2)}+V_{n+1}^{(2)}\right) %\\ &
%=\b_{n} \big(\b_{n-1} + \b_{n} + \b_{n+1}\big),\\
%V_{n\phantom{1}}^{(6)} &= V_{n\phantom{1}}^{(2)}\left(V_{n-1}^{(2)}V_{n+1}^{(2)} +V_{n-1}^{(4)}+V_{n\phantom{1}}^{(4)}+V_{n+1}^{(4)}\right)\\
%&=\b_{n} \big(\b_{n-2} \b_{n-1} + \b_{n-1}^2 + 2 \b_{n-1} \b_{n} + \b_{n-1} \b_{n+1} % \\[2.5pt]& 
%+ \b_{n}^2 + 2 \b_{n}\b_{n+1} + \b_{n+1}^2 + \b_{n+1} \b_{n+2}\big).
%\end{align*}
%
%\begin{theorem}\label{lm34}
%The recurrence coefficient $\b_{n}$ for the generalised higher-order Freud weight \eqref{freudg}
%satisfies the discrete equation
%\beq\nonumber
%2m V_{n\phantom{1}}^{(2m)}-2t\b_{n}=n+(\la+\tfrac12)[1-(-1)^n)],
%\eeq
%where $V_{n\phantom{1}}^{(2m)}$ is given by \eqref{def:Vn} and $\b_{n}=b_{n}^2$.
%\end{theorem}
%
%\begin{proof}The case when $\la=-\tfrac12$ is proved by Benassi and Moro \cite{refBM20}, using a result in \cite{refBMX92}. It is straightforward to modify the proof
%for the case when $\la\not=-\tfrac12$.
%\end{proof}

%\end{enumerate}
%\end{remarks}
\item
When $m=2$ the discrete equation is
\beq 4\b_{n} \big(\b_{n-1} + \b_{n} + \b_{n+1}\big)-2t\b_{n}=n+(\la+\tfrac12)[1-(-1)^n)], \label{eq:rr42} \eeq
which is dP$_{\rm I}$
and when $m=3$ the discrete equation is
\begin{align}6\b_{n} \big(\b_{n-2} \b_{n-1} &+ \b_{n-1}^2 + 2 \b_{n-1} \b_{n} + \b_{n-1} \b_{n+1} % \\[2.5pt]& 
+ \b_{n}^2 + 2 \b_{n}\b_{n+1} + \b_{n+1}^2 + \b_{n+1} \b_{n+2}\big) \nonumber\\
&-2t\b_{n}=n+(\la+\tfrac12)[1-(-1)^n)],\label{eq:rr62}\end{align}
which is a special case of dP$_{\rm I}^{(2)}$, the second member of the discrete \p\ I hierarchy. For further information about the discrete \p\ I hierarchy, see \cite{refCJ99a,refCJ99b}. Equations \eqref{eq:rr42} and \eqref{eq:rr62} with $t=0$ were derived by Freud \cite{refFreud76}; see also \cite{refMagnus85,refWVAbk}.
Further equation \eqref{eq:rr42} and \eqref{eq:rr62} with $\la=-\tfrac12$ are also known as ``string equations" and arise in important physical applications such as two-dimensional quantum gravity, cf.~\cite{refDS,refGMig,refFIK91,refFIK92,refKMMOZ,refPS}.
\end{enumerate}
\end{remarks}

\subsection{Asymptotics for the recurrence coefficients as {$n\to\infty$}}

In 1976, Freud \cite{refFreud76} conjectured that the asymptotic behavior of recurrence coefficients $\b_{n}$ in the recurrence relation \eqref{eq:srr} satisfied by monic polynomials ${\{P_{n} (x) \}_{n\geq0}}$ orthogonal with respect to the weight
\begin{equation*}\w(x) = |x|^{\rho}\exp(-|x|^{m}),\end{equation*}
with $x \in \R$, $\rho>-1$, $m>0$ could be described by
\begin{equation}\label{Freudconj}
\ds \lim_{n\rightarrow \infty} \frac{\b_{n}}{n^{2/m}}= \left[ \frac{\Gamma(\tfrac{1}{2}m)\, \Gamma(1+\tfrac{1}{2}m)}{\Gamma(m+1)}\right]^{2/m}.
\end{equation}
Freud stated the conjecture for orthonormal polynomials, proved it for $m=2,4,6$ and also showed that \eqref{Freudconj} is valid whenever the limit on the left-hand side exists. Magnus \cite{refMagnus85} proved Freud's conjecture for the case when $m$ is an even positive integer and also for weights %$w(x)=\exp\{-Q(x)\}$,
\begin{equation*}w(x)=\exp\{-Q(x)\}, \end{equation*}
where $Q(x)$ is an even degree polynomial with positive leading coefficient. We refer the reader to \cite[\S4.18]{refNevai86} for a detailed history of solutions to Freud's conjecture up to that point. The conjecture was settled by Lubinsky, Mhaskar and Saff in \cite{refLubinskyMS} as a special case of a more general result for recursion coefficients of exponential weights, see also \cite{refLubinskyMS86}. In \cite{refLB88}, Lubinsky and Saff introduced the class of {\it very smooth Freud weights} %\begin{equation*} \exp\{-Q(x)\},\end{equation*} 
of order $\a$ with conditions on $Q$ that are satisfied when $Q$ is of the form $x^{\a},$ $\a>0$. Associated with each weight in this class, one can define $a_{n}$ as the unique, positive root of the equation (cf.~\cite[p.\ 67]{refLubinskyMS} and references therein)
\begin{equation}\label{LMSnumber} n= \frac{2}{\pi} \ds \int_{0}^{1} \frac{a_{n}s\, Q'(a_{n}s)}{\sqrt{1-s^2}}\,\d{s}.\end{equation} 
 
\begin{theorem}\label{FC} Consider the generalised higher order Freud weight \eqref{freudg}. Then the recurrence coefficients $\b_{n}$ associated with this weight satisfy 
	\begin{equation*}\label{eq415}\lim_{n\to \infty}	\frac{\b_{n}(t;\la)}{n^{1/m}} = \frac 14\left(\frac{(m-1)!}{(\tfrac12)_m}\right)^{\!1/m}.
 \end{equation*}
\end{theorem}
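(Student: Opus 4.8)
The plan is to combine the discrete equation \eqref{Vn2m} from Proposition~\ref{lm34} with Freud's conjecture as established in the literature. First I would invoke the result of Lubinsky--Mhaskar--Saff \cite{refLubinskyMS} (together with the discussion around \eqref{Freudconj}--\eqref{LMSnumber}) to assert that, for the weight \eqref{freudg}, which upon setting $t=0$ is a very smooth Freud weight with $Q(x)=x^{2m}$, the limit $\lim_{n\to\infty}\b_n/n^{1/m}$ exists. Once existence is granted, one may compute the value either from \eqref{Freudconj} directly with ``$m$'' there replaced by $2m$, or — more in keeping with the self-contained tone of the paper — by dividing \eqref{Vn2m} through by an appropriate power of $n$ and passing to the limit. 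Concretely, set $L=\lim_{n\to\infty}\b_n/n^{1/m}$ (assuming it exists) and write $\b_{n+j}/n^{1/m}\to L$ for each fixed $j$. Since $V_n^{(2m)}$ is, by the recursion \eqref{Vn2m exp1}--\eqref{Vn2m exp2}, a polynomial in the $\b_{n+j}$ that is homogeneous of degree $m$ and whose ``leading'' behaviour is governed by a fixed combinatorial constant $c_m$, one gets $V_n^{(2m)}/n \to c_m L^m$. The term $2t\b_n/n \to 0$ and the right-hand side of \eqref{Vn2m} divided by $n$ tends to $1$, so $2m\,c_m L^m = 1$, giving $L = (2m\,c_m)^{-1/m}$.

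\medskip
The main work is therefore identifying the constant $c_m$: the homogeneous-degree-$m$ leading term of $V_n^{(2m)}$ when all $\b_{n+j}$ are set equal. From $V_n^{(2)}=\b_n$ one has $c_1=1$; from $V_n^{(4)}=V_n^{(2)}(V_{n+1}^{(2)}+V_n^{(2)}+V_{n-1}^{(2)})$ one gets, on setting all $\b$'s equal to $\b$, $V^{(4)}=3\b^2$, so $c_2=3$; from $V_n^{(6)}=V_n^{(2)}(V_{n+1}^{(4)}+V_n^{(4)}+V_{n-1}^{(4)}+V_{n+1}^{(2)}V_{n-1}^{(2)})$ one gets $V^{(6)}=\b(3\cdot 3\b^2+\b^2)=10\b^3$, so $c_3=10$. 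These are the central binomial-type numbers $c_m=\binom{2m-1}{m}=\tfrac12\binom{2m}{m}$, which one recognises as $c_m=(m-1)!/(\tfrac12)_m$ after using $\Gamma(2m)=2^{2m-1}\pi^{-1/2}\Gamma(m)\Gamma(m+\tfrac12)$ (Gauss duplication), so that $2m\,c_m=(\tfrac12)_m\cdot 4^m/(m-1)!\cdot\tfrac{?}{}$ — more cleanly, $2m\binom{2m-1}{m}=\binom{2m}{m}$ and a short manipulation yields $L^m = (m-1)!/\big(4^m(\tfrac12)_m\big)$, i.e. $L=\tfrac14\big((m-1)!/(\tfrac12)_m\big)^{1/m}$, matching the claim. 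The cleanest route to $c_m=\binom{2m-1}{m}$ is to observe from \eqref{Vn2m exp} and \eqref{cnn2m} that $V_n^{(2m)}$ is obtained from $C_{n,n}^{(2m)}$, which in turn is the coefficient of $P_n$ in the expansion of $x^{2m}P_n$; when $\b_{n+j}\equiv\b$ the three-term recurrence \eqref{eq:srr} becomes the recurrence for (rescaled) Chebyshev polynomials, and $C_{n,n}^{(2m)}$ equals $\b^m$ times the number of lattice paths of length $2m$ from $0$ to $0$ with steps $\pm1$, namely $\binom{2m}{m}$, whence $V_n^{(2m)}\to\tfrac12\binom{2m}{m}\b^m$ and $c_m=\binom{2m-1}{m}$.

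\medskip
The hard part will be rigour in the limiting argument, not the algebra: one must justify that existence of $\lim \b_n/n^{1/m}$ (which is what \cite{refLubinskyMS} supplies for $t=0$) persists for $t\ne 0$, and that the sub-leading terms in $V_n^{(2m)}$ — those of homogeneous degree $m$ in the $\b$'s but with ``spread'' indices — do not contribute in the limit. For the $t$-dependence I would note that the weight $\w(x;t,\la)=\e^{tx^2}\w(x;0,\la)$ differs from the pure Freud weight by a factor whose logarithm grows only quadratically, so the Mhaskar--Rakhmanov--Saff analysis (equivalently, the equilibrium-measure characterisation behind \eqref{LMSnumber}) is unaffected to leading order; alternatively one can cite the stronger asymptotic results for Freud-type weights with polynomial perturbations. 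For the sub-leading terms, since each $V_n^{(2m)}$ is a fixed finite polynomial in finitely many $\b_{n+j}$ and every monomial appearing has total $\b$-degree exactly $m$, continuity of that polynomial and $\b_{n+j}/n^{1/m}\to L$ for each fixed $j$ immediately give $V_n^{(2m)}/n\to P(L,\dots,L)=c_m L^m$ where $P$ is that polynomial evaluated on the diagonal. This reduces everything to the combinatorial identification of $c_m$ above, completing the proof.
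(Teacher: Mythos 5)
Your proposal arrives at the correct value but by a genuinely different route from the paper. The paper's proof is a one-step appeal to \cite[Theorem 2.3]{refLubinskyMS}: it sets $Q(x)=\tfrac12x^{2m}$, evaluates the integral \eqref{LMSnumber} to get the Mhaskar--Rakhmanov--Saff number $a_n^2=\bigl((m-1)!\,n/(\tfrac12)_m\bigr)^{1/m}$, and then cites the theorem with $w=|x|^{\la+1/2}$, $P(x)=\tfrac12tx^2$, $\Psi(x)=1$ — so the generalised Jacobi factor and the $\e^{tx^2}$ perturbation are absorbed directly into the hypotheses of the cited result, which yields both existence of the limit and the value $\tfrac14 a_n^2/n^{1/m}$ at once. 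You instead use the literature only for \emph{existence} of $\lim\b_n/n^{1/m}$ and extract the \emph{value} from the string equation \eqref{Vn2m}, in the spirit of Freud's original argument: your identification $c_m=\tfrac12\binom{2m}{m}$ via the lattice-path count for $C_{n,n}^{(2m)}$ on the constant-$\beta$ diagonal is correct, and the arithmetic $2mc_mL^m=1$ together with $(\tfrac12)_m=(2m)!/(4^m m!)$ does reproduce $L=\tfrac14\bigl((m-1)!/(\tfrac12)_m\bigr)^{1/m}$; the diagonal-limit step is also sound once one notes (a short induction on \eqref{Vn2m exp1}) that $V_n^{(2m)}$ is a fixed polynomial, homogeneous of degree $m$, in finitely many $\b_{n+j}$. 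What your route buys is a self-contained, combinatorial derivation of the constant that makes the link to the dP$_{\rm I}$ hierarchy explicit; what it costs is that the one genuinely delicate point — existence of the limit for $t\ne0$ and $\la\ne-\tfrac12$ — is left at the level of a heuristic about the equilibrium measure. That caveat is real but repairable by exactly the citation the paper uses: \cite[Theorem 2.3]{refLubinskyMS} already accommodates the factors $|x|^{2\la+1}$ and $\exp(tx^2)$, so once you invoke it for existence you could equally well read off the value, which is why the paper's shorter argument makes your string-equation computation a consistency check rather than a necessary step.
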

\begin{proof}Let $Q(x)=\tfrac{1}{2}x^{2m}$, then evaluating \eqref{LMSnumber} yields 
 {$\ds n=\frac{a_{n}^{2m}(\tfrac12)_m}{(m-1)!}$}\comment{\frac{2ma_{n}^{2m}}{\pi}\ds \int_{0}^{1} \frac{s^{2m}}{\sqrt{1-s^2}}\,\d {s} %\\ &
 = \frac{ma_{n}^{2m}}{\pi}\ds \int_{0}^{1} \frac{t^{m-1/2}}{\sqrt{1-t}}\,\d{t}\\
 &=\frac{ma_{n}^{2m}}{\pi}B(m+\tfrac12,\tfrac12) %\\ &
 =\frac{ma_{n}^{2m}}{\pi}\frac{\Gamma(m+\tfrac12)\,\Gamma{(\tfrac12)}}{\Gamma(m+1)} %\\ &

where $B(p,q)$ denotes the Beta function (cf.~\cite[eq.\ 5.12.1]{DLMF}). Hence $\ds a_{n}^2=\left( \frac{(m-1)!}{(\tfrac12)_m}\, n\right)^{\!1/m}$} and the result {is a straightforward consequence of the more general result in} \cite[Theorem 2.3]{refLubinskyMS} taking $W(x)=\exp\{-Q(x)\}$, $w=|x|^{\la+1/2}$, $P(x)=\tfrac{1}{2}tx^2$ and $\Psi(x)=1$.
\end{proof}

\begin{remark}
Taking $m=2$ in Theorem \ref{FC}, we recover \cite[Corollary 4.2 (ii)]{refCJ18} for the recurrence coefficients associated with the generalised quartic Freud weight $|x|^{2\la+1}\exp(tx^2-x^4)$ which satisfy 
	\[\lim_{n\to \infty}	\frac{\b_{n}(t;\la)}{\sqrt{n}} = \frac{1}{\sqrt{12}},\]
	while, for $m=3$, the recurrence coefficients associated with the generalised sextic Freud weight $|x|^{2\la+1}\exp(tx^2-x^6)$ satisfy 
	\[\lim_{n\to \infty}\frac{\b_{n}(t;\la)}{\sqrt[3]{n}} = \frac1{\sqrt[3]{60}},\] as shown in \cite[Corollary 4.8]{{refCJ21b}}.
\end{remark}

\section{Generalised higher order Freud polynomials}\label{sec:relations}
\subsection{Differential equations}
The second order differential equations satisfied by generalised higher order Freud polynomials can be obtained by using ladder operators as was done for the special cases $m=2$ and $m=3$ in \cite[Theorem 6]{refCJK} and \cite[Theorem 4.3]{refCJ21a}, respectively. {An alternative approach is given by Maroni in \cite{refMaroni} and \cite{refMaroni2}. }

\begin{proposition}
 The polynomial sequence $\big\{P_{n}(x)\big\}_{n\geq 0}$ orthogonal with respect to the generalised higher order Freud weight \eqref{freudg} is a solution to the differential equation 
 \begin{equation*}
 J(x;n) \deriv[2]{P_{n+1}}{x}(x) + K(x;n) \deriv{P_{n+1}}{x}(x) + L(x;n) P_{n+1}(x) =0, 
 \end{equation*}
where 
\begin{align*}
 & J(x;n) = x D_{n+1}(x) , \\
 & K(x;n)= C_0(x) D_{n+1}(x) - x \deriv{D_{n+1}}{x}(x) + D_{n+1}(x), \\
 & L(x;n) = \mathcal{W}\left(\tfrac{1}{2} (C_{n+1}(x) - C_0(x) ), D_{n+1}(x) \right) - D_{n+1}(x)\sum_{j=0}^n \frac{1}{\b_j} D_j(x), 
\end{align*}
with 
\begin{align*}
 & C_{n+1} (x) = - C_{n}(x) + \frac{2x}{\b_{n}} D_{n}(x), \qquad
 D_{n+1}(x) = -x + \frac{\b_{n}}{\b_{n-1}} D_{n-1}(x) + \frac{x^2}{\b_{n}} D_{n}(x) - x C_{n}(x), 
\end{align*}
subject to the initial conditions $C_0(x) = -1 + 2(tx^2-mx^{2m}+\la+1)$, $D_{-1}(x) = 0$ and 
\[D_0(x) = 2x \left\{m \sum_{j=1}^{m} \mu_{2j-2}(t,\la) x^{2m-2j}
 -t \mu_0(t,\la)\right\}.
\]
\end{proposition}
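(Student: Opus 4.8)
The plan is to use the ladder-operator method, exactly as was done for $m=2$ in \cite{refCJK} and for $m=3$ in \cite{refCJ21a} (Maroni's structure-relation approach of \cite{refMaroni,refMaroni3} is an equivalent route). Write $\w(x;t,\la)=\exp\{-v(x)\}$ with $v(x)=x^{2m}-tx^2-(2\la+1)\ln|x|$; the Pearson equation \eqref{w diff eq} then reads $\big(x\,\w\big)'=\tau\,\w$ with $\tau(x)=2(tx^2-mx^{2m}+\la+1)$, so that $x\,v'(x)=1-\tau(x)$. This identity is the source of the stated value $C_0(x)=-x\,v'(x)=\tau(x)-1=-1+2(tx^2-mx^{2m}+\la+1)$. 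Because $v'$ is singular at the origin, the entire computation is organised around $\sigma(x)=x$ rather than around $w=\e^{-v}$ directly; the only place the singularity enters is through integrands of the form $(\tau(x)-\tau(y))/(x-y)$, which are genuine polynomials in $x,y$, so no principal values are needed.

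First I would establish the lowering relation
\[
 x\,\frac{\d P_n}{\d x}(x)=\tfrac12\big(C_n(x)-C_0(x)\big)P_n(x)+D_n(x)\,P_{n-1}(x),\qquad n\ge0,
\]
where
\[
 D_n(x)=\frac{\b_n}{h_n}\int_{-\infty}^\infty\frac{\tau(y)-\tau(x)}{x-y}\,P_n^2(y)\,\w(y)\,\d y,\qquad
 \tfrac12\big(C_0(x)-C_n(x)\big)=\frac{1}{h_{n-1}}\int_{-\infty}^\infty\frac{\tau(y)-\tau(x)}{x-y}\,P_n(y)P_{n-1}(y)\,\w(y)\,\d y,
\]
obtained by integrating $x\,P_n'$ against $P_k\,\w$ and using $(y\,\w)'=\tau\,\w$; equivalently one may start from the structure relation \eqref{eq: struct Pn} already in the paper and iterate \eqref{eq:srr} to reduce $P_{n-2\ell}$ to $P_n,P_{n-1}$ with polynomial coefficients. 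Evaluating at $n=0$ with $P_0\equiv1$, $P_{-1}\equiv0$, $h_0=\mu_0$ and the convention $\b_0=\mu_0$, and using $\mu_{2k-1}=0$, reproduces exactly $C_0(x)$, $D_0(x)=2x\{m\sum_{j=1}^m\mu_{2j-2}x^{2m-2j}-t\mu_0\}$ and $D_{-1}=0$.

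Next I would feed the three-term recurrence \eqref{eq:srr} into the lowering relation and match coefficients of the basis elements, which yields the two compatibility (``$S_1$'' and ``$S_2$'') identities; rewritten in terms of $C_n$ and $D_n$ these become precisely
\[
 C_{n+1}(x)=-C_n(x)+\frac{2x}{\b_n}D_n(x),\qquad
 D_{n+1}(x)=-x+\frac{\b_n}{\b_{n-1}}D_{n-1}(x)+\frac{x^2}{\b_n}D_n(x)-x\,C_n(x),
\]
which also makes it evident, by induction from polynomial initial data, that every $C_n,D_n$ is a polynomial. Finally, the ``sum rule'' $B_n^2+v'B_n+\sum_{j<n}A_j=\b_nA_nA_{n-1}$ — in its $\sigma=x$ form expressed through $C_n,D_n$ — combined with the lowering relation and its $x$-derivative produces a second-order ODE for $P_{n+1}$. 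Clearing denominators by multiplying through by $x\,D_{n+1}(x)$, and substituting $C_0=-x\,v'$ together with $x\,A_{n+1}'/A_{n+1}=x\,D_{n+1}'/D_{n+1}-1$, collapses the coefficients to $J(x;n)=x\,D_{n+1}$, $K(x;n)=C_0D_{n+1}-xD_{n+1}'+D_{n+1}$ and $L(x;n)=\mathcal{W}\!\big(\tfrac12(C_{n+1}-C_0),D_{n+1}\big)-D_{n+1}\sum_{j=0}^n\b_j^{-1}D_j$, as asserted.

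I expect the bulk of the work to be bookkeeping rather than any single conceptual obstacle — the $m=2$ and $m=3$ instances already occupy full theorems in \cite{refCJK,refCJ21a}, and the general-$m$ recursion for $C_n,D_n$ has many terms. The one genuinely delicate point is the factor $|x|^{2\la+1}$: one must confirm that organising everything through $(\sigma\w)'=\tau\w$ with $\sigma(x)=x$ (so that all occurring kernels are polynomial) is legitimate for every $\la>-1$, that the symmetry of $\w$ (equivalently $\mu_{2k-1}=0$) keeps $C_n$ even and $D_n$ odd of the expected degrees, and that the normalisation conventions ($h_n$ versus $\b_n$, with $\b_0=\mu_0$) are tracked consistently so that the spurious $1/x$ contributions cancel once $x\,D_{n+1}$ is cleared from the differential equation.
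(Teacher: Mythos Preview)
Your proposal is correct and follows essentially the same approach the paper indicates: the paper does not give a detailed proof of this proposition but simply states that the differential equation ``can be obtained by using ladder operators as was done for the special cases $m=2$ and $m=3$ in \cite{refCJK} and \cite{refCJ21a}'', with Maroni's approach \cite{refMaroni,refMaroni3} as an equivalent alternative. Your outline fleshes out precisely this ladder-operator computation, organised around $\sigma(x)=x$ so that the Pearson data $(\sigma,\tau)$ from \eqref{w diff eq} yield polynomial kernels, and your identification $C_0=\tau-1$ together with the $S_1$/$S_2$ compatibility relations reproduces the stated recursions for $C_n,D_n$ and initial data exactly.
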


\subsection{Mixed recurrence relations}
{We first consider the connection formula between the corresponding sequences of generalised higher order Freud orthogonal polynomials in the framework of Christoffel transformations when the measure is modified by multiplying with a polynomial. In our case, the measure is modified by a quadratic factor.}
\begin{theorem}\label{mrec} Let ${\big\{P_{n}(x;\la)\big\}_{n\geq0}}$ be the sequence of monic generalised higher order Freud polynomials orthogonal with respect to the weight \eqref{freudg}, then, for $m,n$ fixed, 
	%	\beq\label{l+2}
	%		x^2P_{n}(x;t,\la+1)=\begin{cases}
	%xP_{n+1}(x;t,\la) \quad &\text{for}\quad n\quad\text{even}, \\[5pt]
	%		xP_{n+1}(x;t,\la)-\left(\b_{n+1}+\ds \frac{P_{n+2}'(0;t,\la)}{P_{n}'(0;t,\la)}\right)P_{n}(x;t,\la), \quad &\text{for}\quad n\quad\text{odd},\end{cases}
	%	\eeq 
	%	\begin{align*}
	%	xP_{n}(x;t,\la+1)&=P_{n+1}(x;t,\la) \quad \text{for}\quad n\quad\text{even}, \\[5pt]
	%		x^2P_{n}(x;t,\la+1)&=	xP_{n+1}(x;t,\la)-\left(\b_{n+1}+\ds \frac{P_{n+2}'(0;t,\la)}{P_{n}'(0;t,\la)}\right)P_{n}(x;t,\la), \quad \text{for}\quad n\quad\text{odd}
	%	\end{align*}
	\begin{subequations}
		\begin{align}\label{rreven}
xP_{2n}(x;\la+1)&=P_{2n+1}(x;\la), \\%[5pt]
\label{rrodd}
x^2P_{2n-1}(x;\la+1)&=xP_{2n}(x;\la)-\left\{\b_{2n}(\la)+\ds \frac{P_{2n+1}'(0;\la)}{P_{2n-1}'(0;\la)}\right\}P_{2n-1}(x;\la).
		\end{align} 
	\end{subequations} 
\end{theorem}

\begin{proof}Let $P_{n}(x;\la+1)$ be the polynomials associated with the even weight function 
	\begin{align*}\w(x;\la+1)&=|x|^{2\la+3}\exp\big(tx^2-x^{2m}\big) %\\ &
		=x^2\w(x;\la), \quad m=2,3,\dots\ .\end{align*} 
	The factor $x^2$ by which the weight $\w(x;\la)$ is modified has a double zero at the origin and therefore Christoffel's formula (cf.~\cite[Theorem 2.5]{refSzego},~\cite[Theorem 2.7.1]{Ismail}), applied to the monic polynomials $P_{n}(x;\la+1)$, is
	\[
	x^2P_{n}(x;\la+1)=\frac{1}{P_{n}(0;\la)P_{n+1}'(0;\la)-P_{n}'(0;\la)P_{n+1}(0;\la)}\left|\begin{matrix} P_{n}(x;\la) & P_{n+1}(x;\la) & P_{n+2}(x;\la)\\
		P_{n}(0;\la) & P_{n+1}(0;\la) & P_{n+2}(0;\la)\\
		P_{n}'(0;\la) & P_{n+1}'(0;\la) & P_{n+2}'(0;\la)\\\end{matrix}\right|.\]
	Since the weight $\w(x;\la)$ is even, we have that $P_{2n+1}(0;\la)=P_{2n}'(0;\la)$ while $P_{2n}(0;\la)\neq0$ and $P_{2n+1}'(0;\la)\neq0$, hence 
	\[
	x^2P_{n}(x;\la+1)=\frac{-1}{ P_{n}'(0;\la)P_{n+1}(0;\la)}\left|\begin{matrix} P_{n}(x;\la) & P_{n+1}(x;\la) & P_{n+2}(x;\la)\\
		0 & P_{n+1}(0;\la) &0\\
		P_{n}'(0;\la) & 0 & P_{n+2}'(0;\la)\\\end{matrix}\right|,
	\] for $n$ odd, while, for $n$ even, 
	\[
	x^2P_{n}(x;\la+1)=\frac{1}{P_{n}(0;\la)P_{n+1}'(0;\la) }\left|\begin{matrix} P_{n}(x;\la) & P_{n+1}(x;\la) & P_{n+2}(x;\la)\\
		P_{n}(0;\la) & 0& P_{n+2}(0;\la)\\
		0 & P_{n+1}'(0;\la) &0\\\end{matrix}\right|.
	\]
	This yields
	\beq\label{even}
	x^2P_{n}(x;\la+1)=P_{n+2}(x;\la)-a_{n}P_{n}(x;\la),
	\eeq where
	\beq \nonumber a_{n}=\begin{cases} \ds\frac{P_{n+2}(0;\la)}{P_{n}(0;\la)},\quad &\text{for}\quad n\quad\text{even},\\[6pt]
		\ds\frac{P_{n+2}'(0;\la)}{P_{n}'(0;\la)},\quad &\text{for}\quad n\quad\text{odd}.\end{cases}\label{ed}\eeq Using the three-term recurrence relation \eqref{eq:srr} to eliminate $P_{n+2}(x;\la)$ in \eqref{even}, we obtain
	\begin{align*}
		x^2P_{n}(x;\la+1)=xP_{n+1}(x;\la)-(\b_{n+1}(\la)+a_{n})P_{n}(x;\la).
	\end{align*} It follows from \eqref{bodd} that, for $n$ even, $\b_{n+1}(\la)+a_{n}=0$ and the result follows.	
\end{proof}	
 
\begin{theorem}\label{qoeq}
	For a fixed $m=2,3,\dots$, let ${\big\{P_{n}(x;\la)\big\}_{n\geq0}}$ be the sequence of monic generalised higher order Freud polynomials orthogonal with respect to the weight \eqref{freudg}. Then, for $n$ fixed, 
	\begin{subequations}
		\begin{align}\label{qorodd}	
P_{2n+1}(x;\la)&=P_{2n+1}(x;\la+1)+\b_{2n}(\la+1)P_{2n-1}(x;\la+1),\\\label{qoreven}
P_{2n}(x;\la)&=P_{2n}(x;\la+1)-\frac{\b_{2n}(\la)\b_{2n-1}(\la+1)P_{2n-1}'(0;\la)}{P_{2n+1}'(0;\la)}P_{2n-2}(x;\la+1).
		\end{align}
	\end{subequations}
\end{theorem} 
\begin{proof}
	Substitute \eqref{rreven} into the three term recurrence relation \begin{equation}\label{nrr}P_{2n+1}(x;\la)=xP_{2n}(x;\la)-\b_{2n}(\la)P_{2n-1}(x;\la),\end{equation} to eliminate $P_{2n+1}(x;\la)$ and obtain
	\[xP_{2n}(x;\la+1)=xP_{2n}(x;\la)-\b_{2n}(\la)P_{2n-1}(x;\la).\] Let $\ds a_{2n}=\frac{P_{2n+1}'(0;\la)}{P_{2n-1}'(0;\la)}$. Substitute \eqref{rrodd} into \eqref{nrr} to eliminate $P_{2n-1}(x;\la)$ and obtain
	\begin{align}\label{nn}xP_{2n}(x;\la+1)&=xP_{2n}(x;\la)- \frac{\b_{2n}(\la)}{\b_{2n}(\la)+a_{2n}}\left(xP_{2n}(x;\la)-x^2P_{2n-1}(x;\la+1)\right)
		%		\\\nonumber&=\left(1-\frac{\b_{2n}(\la)}{\b_{2n}(\la)+a_{2n}}\right)xP_{2n}(x;t,\la)+ \frac{\b_{2n}(\la)}{\b_{2n}(\la)+a_{2n}}x^2P_{2n-1}(x;t,\la+1)
		.\end{align} Simplification and rearrangement of terms in \eqref{nn} yields \[\left(1-\frac{\b_{2n}(\la)}{\b_{2n}(\la)+a_{2n}}\right)P_{2n}(x;\la)	= P_{2n}(x;\la+1)-\frac{\b_{2n}(\la)}{\b_{2n}(\la)+a_{2n}}xP_{2n-1}(x;\la+1),\] then, using the three term recurrence relation to eliminate $xP_{2n-1}(x;\la+1)$, we obtain
	\begin{align*}\left(1-\frac{\b_{2n}(\la)}{\b_{2n}(\la)+a_{2n}}\right)P_{2n}(x;\la)&= \left(1-\frac{\b_{2n}(\la)}{\b_{2n}(\la)+a_{2n}}\right)P_{2n}(x;\la+1)\\& \qquad+\frac{\b_{2n}(\la)}{\b_{2n}(\la)+a_{2n}}\b_{2n-1}(\la+1)P_{2n-2}(x;\la+1),\end{align*} which simplifies to \eqref{qoreven}.
	Substituting \eqref{rreven} into the three term recurrence relation \[P_{2n+1}(x;\la+1)=xP_{2n}(x;\la+1)-\b_{2n}(\la+1)P_{2n-1}(x;\la+1),\] yields \eqref{qorodd}. 
\end{proof} 
{Theorem \ref{qoeq} gives the connection formula between the corresponding sequences of generalised higher order Freud polynomials in the framework of Geronimus transformations, the inverse of a Christoffel transformation. For more on quadratic Geronimus transformations of a weight $\w(x)$, where $(x^2-c)v(x) = \w(x)$, see \cite{refGMVH}. The generalised Christoffel formula, where the weight is modified by a rational function, often referred to as an Uvarov transformation, can also be considered as the Darboux transformation of an integrable system (cf. \cite{refBM,Ismail}) and are considered in the framework of Gaussian quadrature rules in \cite{refGautschi1,refGautschi2}.}  
\subsection{Quasi-orthogonality for {$\lambda\in(-2,-1)$}}
Theorem \ref{qoeq} yields the quasi-orthogonality of generalised higher order Freud polynomials for $-2<\lambda<-1$.
\begin{theorem} Suppose $-2<\la<-1$. For each fixed $m=2,3,\dots$, the generalised higher order Freud polynomial $P_{n}(x;\la)$ is quasi-orthogonal of order $2$ on $\R$ with respect to the weight \beq\nonumber |x|^{2\la+3}\exp\big(tx^2-x^{2m}\big),\qquad t \in\R.\eeq 
\end{theorem}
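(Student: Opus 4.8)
The plan is to deduce the result directly from the mixed recurrence relations in Lemma~\ref{qoeq}, which express $P_{n}(x;\la)$ as a short linear combination of the polynomials $P_{k}(x;\la+1)$ orthogonal with respect to the shifted weight $\w(x;\la+1)=|x|^{2\la+3}\exp(tx^2-x^{2m})$. First I would check that for $-2<\la<-1$ the shifted parameter satisfies $\la+1\in(-1,0)$, so that $2(\la+1)+1>-1$ and the weight $\w(x;\la+1)$ is a genuine positive weight on $\R$ with all moments finite; hence the orthogonal family $\big\{P_{n}(x;\la+1)\big\}_{n\geq0}$ exists and is a legitimate orthogonal polynomial sequence. (By contrast, $\w(x;\la)$ itself is not integrable at the origin, which is exactly why $P_{n}(x;\la)$ can only be quasi-orthogonal rather than orthogonal with respect to $\w(x;\la+1)$.)

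Next I would treat the even- and odd-degree cases separately using \eqref{qorodd} and \eqref{qoreven}. For odd degree, \eqref{qorodd} gives
\[
P_{2n+1}(x;\la)=P_{2n+1}(x;\la+1)+\b_{2n}(\la+1)P_{2n-1}(x;\la+1),
\]
a combination of polynomials of degrees $2n+1$ and $2n-1$, i.e.\ degrees differing from $2n+1$ by at most $2$; for even degree, \eqref{qoreven} writes $P_{2n}(x;\la)$ as a combination of $P_{2n}(x;\la+1)$ and $P_{2n-2}(x;\la+1)$, again spanning degrees within $2$ of $2n$. Then I would invoke the standard characterisation of quasi-orthogonality of order $2$: a polynomial $Q$ of degree $n$ is quasi-orthogonal of order $2$ with respect to a weight $\w$ if $\int x^{k} Q(x)\w(x)\,\d x=0$ for $k=0,1,\dots,n-3$ and the relation fails for $k=n-2$ (equivalently, $Q$ lies in $\mathrm{span}\{P_{n},P_{n-1},P_{n-2}\}$ with nonzero $P_{n-2}$-component). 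Writing any polynomial of degree at most $n-3$ in the basis $\{P_{j}(x;\la+1)\}_{j\le n-3}$ and using orthogonality of that family, all inner products $\int x^{k}P_{n}(x;\la)\,\w(x;\la+1)\,\d x$ vanish for $k\le n-3$, which is the definition of quasi-orthogonality of order $2$.

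The one point requiring a little care — and the main (mild) obstacle — is verifying that the order is \emph{exactly} $2$ and not lower, i.e.\ that the coefficient of the degree-$(n-2)$ term $P_{n-2}(x;\la+1)$ is genuinely nonzero, so that $\int x^{n-2}P_{n}(x;\la)\,\w(x;\la+1)\,\d x\neq0$. In the odd case this coefficient is $\b_{2n}(\la+1)$, which is positive since it is a recurrence coefficient of a bona fide orthogonal sequence (here one uses $\la+1>-1$). In the even case the coefficient is
\[
-\frac{\b_{2n}(\la)\,\b_{2n-1}(\la+1)\,P_{2n-1}'(0;\la)}{P_{2n+1}'(0;\la)},
\]
and I would argue it is nonzero by noting that $\b_{2n-1}(\la+1)>0$, that $\b_{2n}(\la)\neq0$ (the Wronskian formulas of the lemma in \S\ref{sec:rcoef} express it as a ratio of nonvanishing Hankel determinants, since the moments still exist for $\la\in(-2,-1)$ via analytic continuation of the $\Gamma$-function expression for $\mu_0$), and that $P_{2k+1}'(0;\la)\neq0$ for all $k$ because $P_{2k+1}(x;\la)$ is an odd polynomial of degree $2k+1$ with nonzero linear coefficient (its linear coefficient is $c_{1}^{(2k+1)}=(-1)^{k}\prod_{j=1}^{k}\b_{2j}(\la)\neq0$, as already noted around \eqref{bodd}). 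Assembling these observations gives quasi-orthogonality of order exactly $2$ in both parities, completing the proof.
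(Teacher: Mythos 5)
Your proposal follows essentially the same route as the paper: substitute the connection formulas \eqref{qorodd} and \eqref{qoreven} of Lemma \ref{qoeq} into the moments against $|x|^{2\la+3}\exp(tx^2-x^{2m})$ and invoke orthogonality of $\big\{P_{n}(x;\la+1)\big\}$, which is legitimate because $\la+1>-1$. You go slightly further than the paper by checking that the order is exactly $2$ (nonvanishing of the $P_{n-2}(x;\la+1)$-coefficient), a point the paper's proof leaves implicit; the only slip is your explicit formula $c_{1}^{(2k+1)}=(-1)^{k}\prod_{j=1}^{k}\b_{2j}$, which is incorrect (already $c_{1}^{(3)}=-(\b_1+\b_2)$), though the needed conclusion $P_{2k+1}'(0;\la)\neq0$ still holds since $P_{2k+1}(x)=xR_k(x^2)$ with $R_k$ having only positive zeros.
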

\begin{proof}
	Suppose $-2<\la<-1$, then $\la+1>-1$.
	When $n$ is even, we have from \eqref{qoreven} that \begin{align}\nonumber\imp &x^kP_{n}(x;\la)\,\,|x|^{2\la+3}\exp\big(tx^2-x^{2m}\big)\,\d x \\&\label{oreven}= \imp x^kP_{n}(x;\la+1)\,\,|x|^{2\la+3}\exp\big(tx^2-x^{2m}\big)\,\d x\\&\qquad-\frac{\b_{n}(\la)\b_{n-1}(\la+1)P_{n-1}'(0;\la)}{P_{n+1}'(0;\la)}\imp x^kP_{n-2}(x;\la+1)\,\,|x|^{2\la+3}\exp\big(tx^2-x^{2m}\big)\,\d x,	\nonumber 
	\end{align} 
	while, for $n$ is odd, it follows from \eqref{qorodd} that\begin{align}
		\nonumber\imp &x^kP_{n}(x;\la)\,\,|x|^{2\la+3}\exp\big(tx^2-x^{2m}\big)\,\d x \\&\label{orodd}= \imp x^kP_{n}(x;\la+1)\,\,|x|^{2\la+3}\exp\big(tx^2-x^{2m}\big)\,\d x\\&\qquad+\b_{n}(\la+1)\imp x^kP_{n-2}(x;\la+1)\,\,|x|^{2\la+3}\exp\big(tx^2-x^{2m}\big)\,\d x.	\nonumber
	\end{align} Since $\la+1>-1$, it follows from the orthogonality of the generalised higher order Freud polynomials that \[\imp x^kP_{n}(x;\la+1)\,\,|x|^{2\la+3}\exp\big(tx^2-x^{2m}\big)=0\qquad\text{for}\quad k=0,\dots,n-1,\] and we see that all the integrals on the righthand side of \eqref{oreven} and \eqref{orodd} are equal to zero for $k=0,\dots,n-3$. 
\end{proof}
\section{Zeros of generalised higher order Freud polynomials}\label{sec:zeros} 

\subsection{Asymptotic zero distribution} 
The asymptotic behaviour of the recurrence coefficients of generalised higher order Freud polynomials orthogonal with respect to \eqref{freudg}, satisfying Freud's conjecture, given by \eqref{Freudconj}, is independent of the values of $t$ and $\la$. The asymptotic behaviour implies that the recurrence coefficients are regularly varying, irrespective of $t$ and $\la$. To consider the asymptotic distribution of the zeros of generalised higher order Freud polynomials orthogonal with respect to the weight \eqref{freudg} as $n\to \infty$, we use an appropriate scaling and apply the property of regular variation as detailed in \cite{refKVA1999}. 
\begin{theorem}\label{realasymp} Let $\phi(n)=n^{1/(2m)}$ and assume that $n,N$ tend to infinity in such a way that the ratio $n/N\to \ell$. Then, for the sequence of scaled monic polynomials $P_{n,N}(x)=(\phi(N))^{-n}P_{n}(\phi(N)x)$ associated with the generalised higher order Freud weight \eqref{freudg}, the asymptotic zero distribution, as $n\to \infty$, has density \begin{equation}\label{dens}a_m(\ell)=\frac{2m}{c\pi (2m-1)}\left(1-{x^2}/{c^2}\right)^{\!1/2}{}_2F_1\left(1,1-m;\tfrac32-m;{x^2}/{c^2}\right),\end{equation} where \begin{equation*}c=2a\ell^{1/(2m)} \quad \text{with} \quad a= \tfrac12\left(\frac{(m-1)!}{\!(\tfrac12)_m}\right)^{\!1/(2m)},\end{equation*} defined on the interval %\[\label{range}
$(-2 a \ell^{1/(2m)},2a\ell^{1/(2m)})$.\end{theorem}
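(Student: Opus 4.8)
The plan is to invoke the general machinery of Kuijlaars and Van Assche \cite{refKVA1999} on the limiting zero distribution of polynomials whose recurrence coefficients vary regularly, combined with the precise asymptotics for $\b_{n}$ established in Theorem \ref{FC}. First I would recall that for the symmetric recurrence \eqref{eq:srr} with $\a_{n}=0$, the relevant hypothesis of \cite{refKVA1999} is that $\b_{n}/\big(\phi(n)\big)^2$ has a limit as $n\to\infty$ along the ray $n/N\to\ell$; here $\phi(n)=n^{1/(2m)}$, so $\big(\phi(n)\big)^2=n^{1/m}$, and Theorem \ref{FC} gives exactly $\b_{n}/n^{1/m}\to a^2=\tfrac14\big((m-1)!/(\tfrac12)_m\big)^{1/m}$. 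Because this limit is independent of $t$ and $\la$ and the map $n\mapsto n^{1/m}$ is regularly varying with a positive index, the regular-variation hypotheses of \cite{refKVA1999} are satisfied, and the scaled polynomials $P_{n,N}(x)=\big(\phi(N)\big)^{-n}P_{n}\big(\phi(N)x\big)$ have a well-defined limiting zero-counting measure supported on a symmetric interval determined by the limit function $\ell\mapsto a^2\ell^{1/m}$.

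Next I would carry out the explicit computation of that limiting measure. The result of \cite{refKVA1999} expresses the density of the asymptotic zero distribution as an integral over the ``spectral curve'' attached to the limiting recurrence coefficient; concretely, when $\b_{n}/n^{1/m}\to a^2$, the endpoint of the support at level $n/N\to\ell=1$ is $2a\,\ell^{1/(2m)}$, which matches the stated interval $(-2a\ell^{1/(2m)},\,2a\ell^{1/(2m)})$ once one sets $c=2a\ell^{1/(2m)}$. The density is obtained by the formula $a_m(\ell)(x)=\dfrac{1}{\pi}\displaystyle\int_{|x|/c}^{1}\dfrac{\d u}{u\sqrt{u^{2}-x^{2}/c^{2}}}\cdot(\text{derivative of the inverse of }\ell\mapsto c(\ell))$, or equivalently via the Stieltjes transform of the arcsine-type balayage measure; evaluating this for the power law $\b\sim a^2 n^{1/m}$ produces a $_2F_1$ hypergeometric integral. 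The key computational step is recognising the resulting integral $\int_{0}^{1}(1-s)^{\cdots}(1-s\,x^2/c^2)^{-1}\,\d s$ (after a substitution $s=u^{2}$) as an Euler-type integral representation of $ {}_2F_1\!\big(1,1-m;\tfrac32-m;x^2/c^2\big)$, using \cite[(15.6.1)]{DLMF}; the prefactor $\dfrac{2m}{c\pi(2m-1)}$ and the square-root factor $(1-x^2/c^2)^{1/2}$ then come out of the Beta-function normalisation, with $2m-1$ arising from $\Gamma(m+\tfrac12)/\Gamma(m-\tfrac12)=m-\tfrac12$.

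For the bookkeeping I would first treat the model case $t=0$, $\la=-\tfrac12$ (the pure higher-order Freud weight $\exp(-x^{2m})$), where the Mhaskar--Rakhmanov--Saff number $a_n$ from \eqref{LMSnumber} is available in closed form and the equilibrium measure of the external field $Q(x)=\tfrac12 x^{2m}$ can be computed directly; this fixes the density and the endpoint. Then I would argue that adding the lower-order terms $tx^2$ and the factor $|x|^{2\la+1}$ is a bounded (indeed compactly supported, in the relevant scaling) perturbation that does not affect the $n\to\infty$ limit of $\b_{n}/n^{1/m}$ — which is precisely the content of Theorem \ref{FC} and the remark that the limit is independent of $t$ and $\la$ — so the conclusion of \cite{refKVA1999} is unchanged. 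Finally I would check the consistency of the scaling: since $\phi(n)=n^{1/(2m)}$ and $\b_{n}\sim a^2 n^{1/m}=a^2\phi(n)^2$, the rescaled Jacobi operator has off-diagonal entries converging to $a\ell^{1/(2m)}$, giving support half-width $2a\ell^{1/(2m)}=c$, as claimed.

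\textbf{Main obstacle.} The delicate part is not the soft convergence statement but the explicit identification of the density: one must correctly perform the change of variables in the Kuijlaars--Van Assche integral formula and match it, including all Gamma-function constants and the shift of parameters $1,1-m;\tfrac32-m$, to the hypergeometric ${}_2F_1$ in \eqref{dens}. Getting the index $2m-1$ in the denominator and the half-integer bottom parameter $\tfrac32-m$ right requires care with the reflection/contiguous-function identities for ${}_2F_1$, and is where a sign or a factor of $2$ is most likely to slip.
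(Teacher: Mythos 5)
Your proposal follows essentially the same route as the paper: apply \cite[Theorem 1.4]{refKVA1999} with $\phi(n)=n^{1/(2m)}$ regularly varying of index $\tfrac{1}{2m}$ and $\sqrt{\b_{n,N}}\to a\ell^{1/(2m)}$ from Theorem \ref{FC}, then evaluate the resulting averaged-arcsine integral to identify the ${}_2F_1$. The only differences are cosmetic: the paper evaluates that integral by substituting $y=s^{1/(2m)}$, expanding $(1-x^2/(2ay)^2)^{-1/2}$ as a binomial series and resumming via $(\tfrac12-m)_k/(\tfrac32-m)_k=(2m-1)/(2m-2k-1)$ (an Euler transformation then converts ${}_2F_1(\tfrac12,\tfrac12-m;\tfrac32-m;z)$ into the stated $(1-z)^{1/2}{}_2F_1(1,1-m;\tfrac32-m;z)$), rather than using an Euler integral representation, and your detour through the model case $t=0$, $\la=-\tfrac12$ with a perturbation argument is unnecessary since Theorem \ref{FC} already gives the limit of $\b_n/n^{1/m}$ uniformly in $t$ and $\la$.
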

\begin{proof} The scaled monic polynomials $P_{n,N}(x)=(\phi(N))^{-n}P_{n}(\phi(N)x)$ associated with the generalised higher order Freud weight \eqref{freudg} have recurrence coefficient $\b_{n,N}(t;\la)=\frac{\b_{n}(t;\la)}{\left(\phi(N)\right)^2}$. Since $\phi:\R^+\to \R^+$ and, for every $\ell>0$, we have \[\lim_{x\to\infty}\frac{\phi(x\ell)}{\phi(x)}=\ell^{1/(2m)},\] $\phi$ is regularly varying at infinity with exponent of variation $\tfrac{1}{2m}$ (cf.~\cite{refVAG1989}). Since it follows from \eqref{eq415} that \[\lim_{n/N\to \ell}\sqrt{\b_{n,N}(t;\la)}=\lim_{n/N\to\ell}\frac{\sqrt{\b_{n}(t;\la)}}{\phi(n)}\frac{\phi(n)}{\phi(N)}=a\ell^{1/(2m)},\] the recurrence coefficients $\b_{n.N}(t;\la)$ are said to be regularly varying at infinity with index $\tfrac{1}{2m}$ (cf.~\cite[Section 4.5]{refKVA1999}). From the property of regular variation, using \cite[Theorem 1.4]{refKVA1999}, it follows that the asymptotic zero distribution has density \begin{align*}\frac {1}{\pi \ell}\int_0^{\ell}s^{-1/(2m)}&\left(2a-x s^{-1/(2m)}\right)^{\!-1/2}\left(2a+x s^{-1/(2m)}\right)^{\!-1/2}\,\d s\label{KVA}&\\&=\frac{m}{a\pi \ell}\int_0^{\ell^{1/(2m)}}y^{2m-2}\left(1-\left(\frac{x}{2ay}\right)^{\!2} \right)^{\!-1/2}\,\d y\nonumber\\&=\frac{m}{a\pi\ell}\int_0^{\ell^{1/(2m)}}y^{2m-2}\sum_{k=0}^{\infty}\frac{\left(\tfrac12\right)_k}{k!}\left(\frac{x}{2a y}\right)^{\!2k}\,\d y\nonumber
\\&=\frac{m}{a\pi\ell}\sum_{k=0}^{\infty}\frac{\left(\tfrac12\right)_k}{k!}\left(\frac{x}{2a}\right)^{\!2k}\int_0^{\ell^{1/(2m)}}y^{2m-2k-2}\,\d y\nonumber
\\&=\frac{m}{a\pi\ell^{1/(2m)}}\sum_{k=0}^{\infty}\frac{\left(\tfrac12\right)_k}{k!}\frac{1}{2m-2k-1}\left(\frac{x}{2a\ell^{1/(2m)}}\right)^{\!2k}\nonumber\\&=\frac{m}{a\pi\ell^{1/(2m)}(2m-1)}\sum_{k=0}^{\infty}\frac{\left(\tfrac12\right)_k\left(\tfrac12-m\right)_k}{\left(\tfrac 32-m\right)_k k!}\left(\frac{x}{2a\ell^{1/(2m)}}\right)^{\!2k}\nonumber
\\&=\frac{m}{a\pi(2m-1)\ell^{1/(2m)}}{}_2F_1\left(\tfrac12,\tfrac12-m;\tfrac32-m;\left(\frac{x}{2a\ell^{1/(2m)}}\right)^{\!2}\right).
%\\&=\frac{m}{a\pi\ell^{1/(2m)}(2m-1)}\left(1-\left(\frac{x}{2a\ell^{1/(2m)}}\right)^{\!2}\right)^{1/2}{}_2F_1\left( 1,1-m;\tfrac32-m;\left(\frac{x}{2a\ell^{1/(2m)}}\right)^{\!2}\right)\nonumber
\end{align*} %using Euler's transformation (cf.~\cite[(15.8.1)]{DLMF}).
\end{proof}

Figure \ref{density} shows the zeros and the asymptotic distribution according to Theorem \ref{realasymp}.

	\begin{figure}[ht]
	\begin{center}
\includegraphics[width=3in]{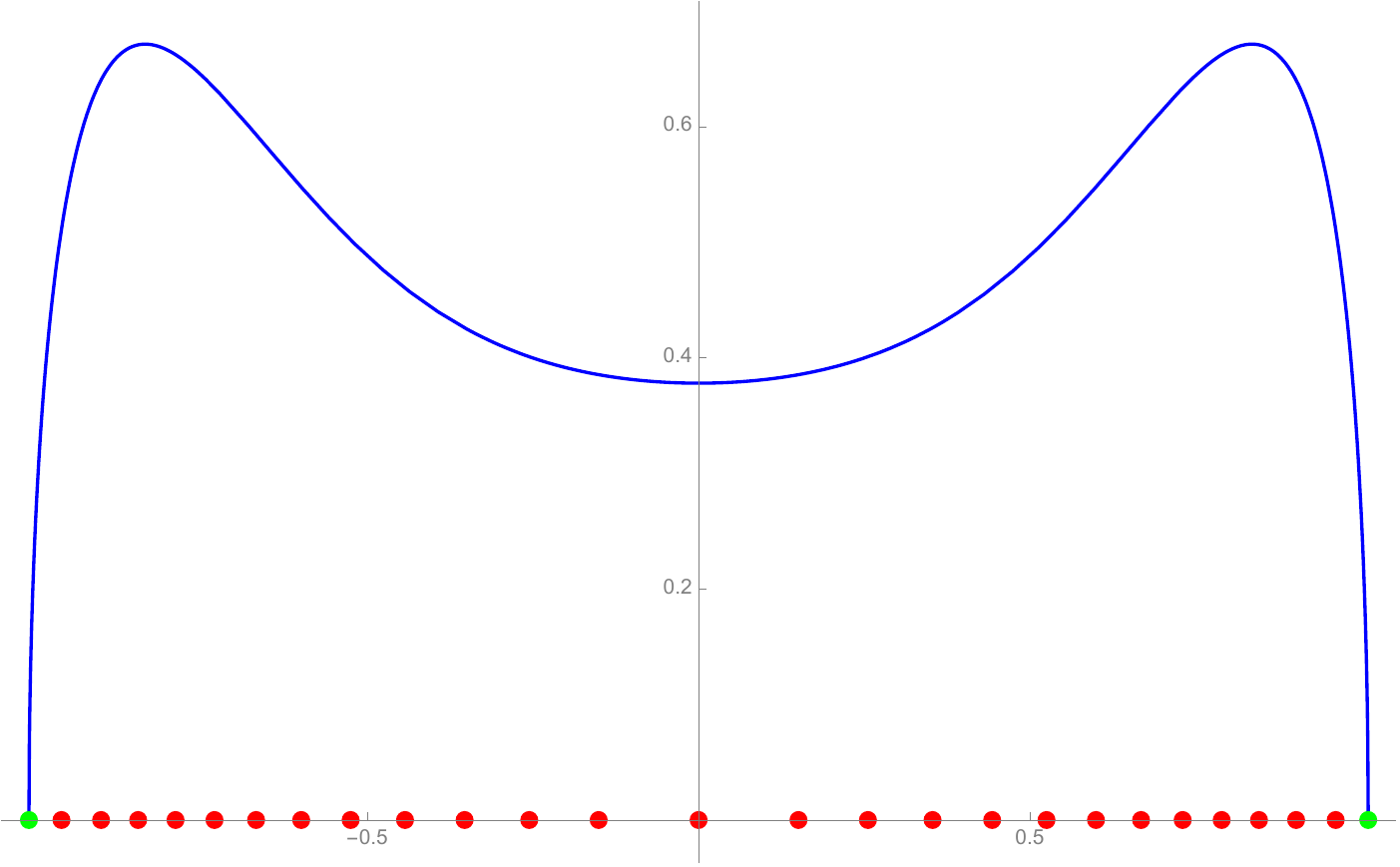}
	\caption{\label{density}The zeros of $P_{n,N}(x)$ ({\color{red}{red}}) for $\la =0.5$, $t=1$, $m=3$, $n=N=10$ and $\ell=1$ with the corresponding limiting distribution (\ref{dens}) ({\color{dkb}{blue}}) and endpoints $(-2a,0)$ and $(2a ,0)$ ({\color{dkg}{green}}).}
		\end{center}
	\end{figure}
	
	Figure \ref{figure2} shows the asymptotic distribution of zeros according to Theorem \ref{realasymp} for various values of $\ell$.

	\begin{figure}[ht]
	\begin{center}
\includegraphics[width=3in]{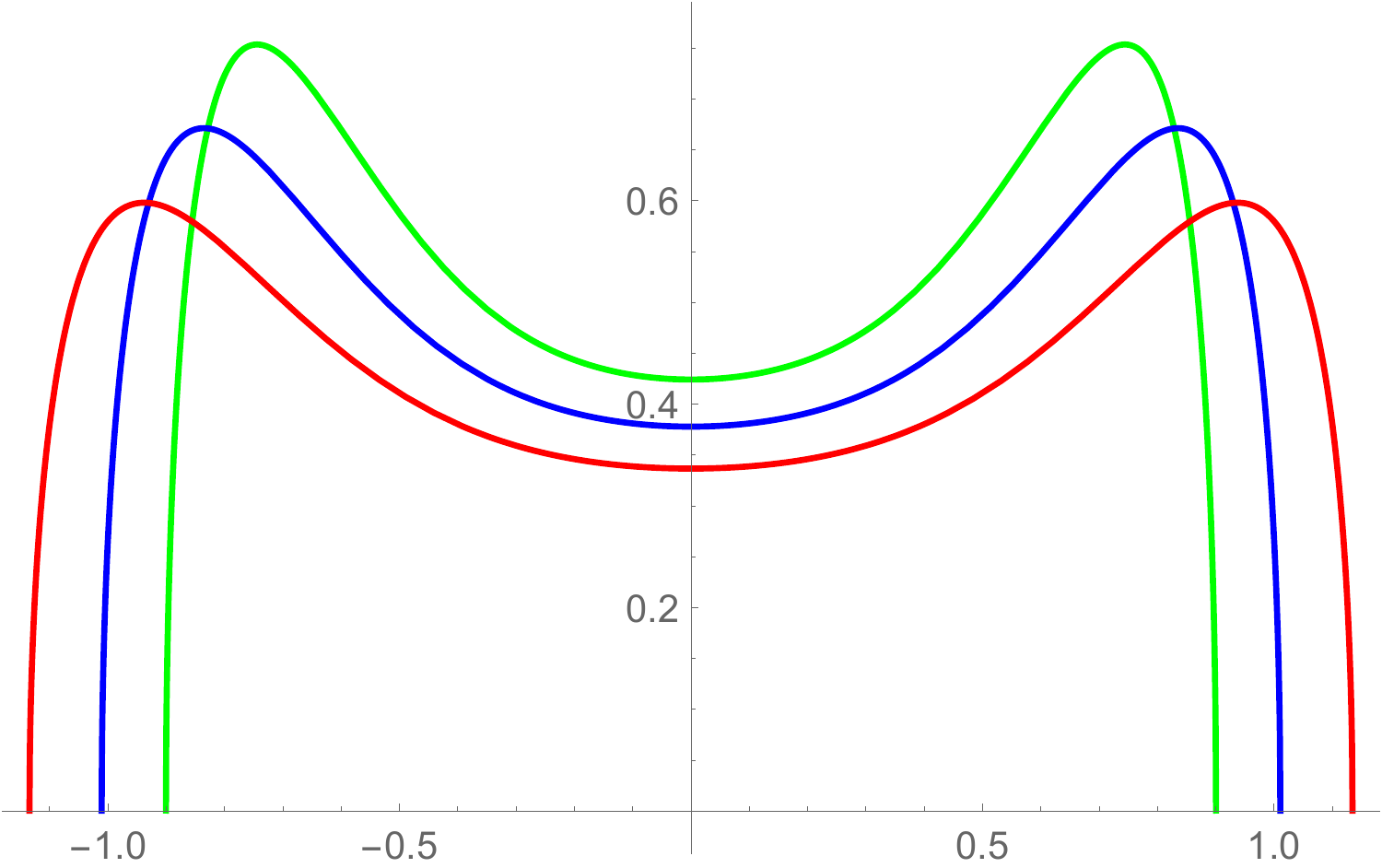} 
	\caption{\label{figure2}The limiting distribution of the zeros $a_3(\ell)$ for $\ell =0.5$ ({\color{dkg}{green}}), $\ell=1$ ({\color{dkb}{blue}}) and $\ell=2$ ({\color{red}{red}}).}
		\end{center}
	\end{figure}

\remark Note that the formula on \cite[p. 189, line 22]{refKVA1999} should be $\ds\frac 1t\int_0^t\frac{1}{s^{\la}}\w'_{[b-2a,b+2a]}(xs^{-\la})\,\d s$.
\subsection{Bounds for the extreme zeros} From the three-term recurrence relation \eqref{eq:3rr}, we obtain bounds for the extreme zeros of monic generalised higher order Freud polynomials.

\begin{theorem} For each $n=2,3,\dots,$ the largest zero, $x_{1,n}$, of monic generalised higher order Freud polynomials $P_{n}(x)$ orthogonal with respect to the weight \eqref{freudg}, satisfies 
\[0<x_{1,n} <\max_{1\leq k\leq n-1}\sqrt{c_{n}\b_k(t;\la)},\]
where $\ds c_{n}=4\cos^2\left(\frac{\pi}{n+1}\right)+\ep$, $\ep>0$.

\end{theorem}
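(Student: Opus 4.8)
The plan is to realise the zeros of $P_{n}(x)$ as the eigenvalues of the associated Jacobi matrix and then to estimate its largest eigenvalue by a Perron--Frobenius comparison. Since the weight \eqref{freudg} is symmetric, the monic polynomials satisfy \eqref{eq:3rr} with vanishing diagonal recurrence coefficients, so I would use the standard fact that the $n$ zeros of $P_{n}(x)$ are exactly the eigenvalues of the $n\times n$ symmetric tridiagonal (Jacobi) matrix $J_{n}$ whose main diagonal is zero and whose sub- and super-diagonal entries are $\sqrt{\b_{1}(t;\la)},\dots,\sqrt{\b_{n-1}(t;\la)}$. In particular $x_{1,n}$ equals the largest eigenvalue of $J_{n}$. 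Because $\b_{k}(t;\la)>0$, the matrix $J_{n}$ is entrywise nonnegative and, for $n\geq2$, nonzero and irreducible, so by the Perron--Frobenius theorem its spectral radius $\rho(J_{n})$ is an eigenvalue, is strictly positive, and coincides with its largest eigenvalue; this immediately yields the lower bound $0<x_{1,n}=\rho(J_{n})$.

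For the upper bound I would set $M=\max_{1\leq k\leq n-1}\b_{k}(t;\la)$ and compare $J_{n}$ with the $n\times n$ tridiagonal matrix $\widetilde J_{n}$ that has zero main diagonal and every sub- and super-diagonal entry equal to $\sqrt{M}$. Then $0\leq J_{n}\leq\widetilde J_{n}$ entrywise, and the monotonicity of the spectral radius of nonnegative matrices under entrywise domination gives $\rho(J_{n})\leq\rho(\widetilde J_{n})$. Now $\widetilde J_{n}$ is $\sqrt{M}$ times the adjacency matrix of the path on $n$ vertices, whose spectrum is the classical set $\big\{2\cos\!\big(k\pi/(n+1)\big):k=1,\dots,n\big\}$, so $\rho(\widetilde J_{n})=2\sqrt{M}\cos\!\big(\pi/(n+1)\big)$. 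Since $4\cos^{2}\!\big(\pi/(n+1)\big)<4\cos^{2}\!\big(\pi/(n+1)\big)+\ep=c_{n}$, combining these estimates gives
\[
0<x_{1,n}\;\leq\;\sqrt{\,4M\cos^{2}\!\left(\frac{\pi}{n+1}\right)}\;<\;\sqrt{c_{n}M}\;=\;\max_{1\leq k\leq n-1}\sqrt{c_{n}\,\b_{k}(t;\la)},
\]
where the last equality uses that $c_{n}$ does not depend on $k$, and this is the assertion.

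I do not expect a genuine obstacle: the argument rests on two standard facts, namely that $0\leq A\leq B$ implies $\rho(A)\leq\rho(B)$ for nonnegative matrices, and that the zero-diagonal tridiagonal matrix with constant off-diagonal entry $\sqrt{M}$ has eigenvalues $2\sqrt{M}\cos(k\pi/(n+1))$. The only steps that need a little care are checking that $J_{n}$ really is nonnegative and irreducible so that Perron--Frobenius applies and delivers $x_{1,n}>0$ (this is where $n\geq2$, hence the presence of $\sqrt{\b_{1}}>0$ off the diagonal, is used), and observing that the role of the parameter $\ep>0$ is precisely to upgrade the sharp but non-strict bound $x_{1,n}\leq 2\sqrt{M}\cos(\pi/(n+1))$ to the strict inequality claimed, which is needed in particular in the degenerate case where all the $\b_{k}(t;\la)$ happen to be equal.
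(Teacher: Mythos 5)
Your proof is correct, but it follows a genuinely different route from the paper. The paper's proof is a one-line appeal to Theorems 2 and 3 of Ismail and Li \cite{refIsmailLi}, which rest on the Wall--Wetzel chain-sequence criterion: the zeros of $P_n$ lie in an interval $(A,B)$ precisely when a certain sequence built from the recurrence coefficients and shifted by $x=A,B$ is a chain sequence, and choosing the constant chain sequence $\tfrac14\sec^2\bigl(\pi/(n+1)\bigr)$ produces, in the symmetric case $\a_k=0$, exactly the bound $\max_k 2\sqrt{\b_k}\cos\bigl(\pi/(n+1)\bigr)$. You instead realise the zeros as eigenvalues of the zero-diagonal Jacobi matrix, invoke Perron--Frobenius (irreducibility from $\b_k>0$) for positivity of the top eigenvalue, and then dominate entrywise by $\sqrt{M}$ times the path-graph adjacency matrix, whose spectral radius $2\cos\bigl(\pi/(n+1)\bigr)$ is classical; all steps are standard and check out, including the final strict inequality supplied by the $\ep$ in $c_n$ and the identity $\sqrt{c_n M}=\max_k\sqrt{c_n\b_k}$. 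The two arguments land on the identical numerical bound; what yours buys is a self-contained, elementary linear-algebra proof needing no external theorem, while the chain-sequence machinery the paper cites is more flexible (it handles nonzero diagonals and non-constant chain sequences, which can sharpen the bound) at the cost of being a black box here.
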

\begin{proof} 
The upper bound for the largest zero $x_{1,n}$ follows by applying \cite[Theorem 2 and 3]{refIsmailLi}, based on the Wall-Wetzel Theorem to the three-term recurrence relation \eqref{eq:3rr}. 
\end{proof}

\subsection{Monotonicity of the zeros}

\begin{theorem}
\label{mono1} Consider $0<x_{\lfloor n/2 \rfloor,n}<\dots<x_{2,n} <x_{1,n} $, the positive zeros of monic orthogonal polynomials $P_{n}(x)$ with respect to the generalised higher order Freud weight \eqref{freudg} where $\lfloor k \rfloor$ denotes the largest integer less than or equal to $k$. 
Then, for $\la>-1$, $t\in \R$ and for a fixed value of $\nu$, $\nu\in \{1,2,\dots, \lfloor n/2 \rfloor\}$, the $\nu$-th zero $x_{n,\nu} $ increases when (i), $\la$ increases; and (ii),
$t$ increases.

\end{theorem}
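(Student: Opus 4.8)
The plan is to reduce the statement, through the quadratic decomposition of the weight, to the classical theorem of A.\ Markov on the monotone dependence of the zeros of orthogonal polynomials on a parameter. Recall Markov's criterion: if $\{p_k(\,\cdot\,;\tau)\}_{k\ge0}$ are monic orthogonal on an interval $I$ with respect to a positive weight $w(x;\tau)$ whose moments may be differentiated in $\tau$ under the integral sign, and if $\partial_\tau\ln w(x;\tau)$ is strictly increasing in $x$ on $I$, then every zero of $p_k(\,\cdot\,;\tau)$ is strictly increasing in $\tau$ (see, e.g., \cite[\S6.12]{refSzego}). This cannot be applied to $P_n(x;\la)$ on $\R$ directly, since $\partial_\la\ln\w(x;t,\la)=2\ln|x|$ and $\partial_t\ln\w(x;t,\la)=x^2$ are both \emph{decreasing} on $(-\infty,0)$; removing this obstruction by passing to the half-line is the crux of the argument and the step I expect to matter most.

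First I would use the elementary part of the quadratic decomposition (the substitution $s=x^2$ combined with the symmetry of $\w$, developed in \sref{sec:qdecomp}): there are monic polynomials $Q_k(\,\cdot\,;t,\la)$ and $R_k(\,\cdot\,;t,\la)$ with $P_{2k}(x;\la)=Q_k(x^2;t,\la)$ and $P_{2k+1}(x;\la)=x\,R_k(x^2;t,\la)$, where a direct change of variables in the orthogonality integrals shows that $\{Q_k\}_{k\ge0}$ is orthogonal on $(0,\infty)$ with weight $W_0(s;t,\la)=s^{\la}\exp(ts-s^m)$ and $\{R_k\}_{k\ge0}$ is orthogonal on $(0,\infty)$ with weight $W_1(s;t,\la)=s^{\la+1}\exp(ts-s^m)$; since $m\ge2$, both moment sequences are finite for all $\la>-1$. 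Hence the positive zeros of $P_n(x;\la)$ are exactly the numbers $\sqrt{s}$ with $s$ a zero of $Q_{n/2}$ when $n$ is even, respectively of $R_{(n-1)/2}$ when $n$ is odd, and this correspondence is the increasing map $s\mapsto\sqrt{s}$. It therefore suffices to show that every zero of $Q_k$ and of $R_k$ is strictly increasing in $\la$ and in $t$.

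Next I would verify Markov's criterion for $W_0$ and $W_1$ on $(0,\infty)$: for $j=0,1$ one has $\partial_\la\ln W_j(s;t,\la)=\ln s$ and $\partial_t\ln W_j(s;t,\la)=s$, both strictly increasing on $(0,\infty)$. The regularity hypotheses hold for every $\la>-1$, since near $\infty$ the factor $\exp(-s^m)$ dominates all the relevant integrands and near $0$ one has $s^{\la}|\ln s|,\ s^{\la+1}\in L^1(0,1)$ for $\la>-1$, which legitimises differentiating the moments of $W_0$ and $W_1$ under the integral sign. Markov's theorem then gives that every zero of $Q_k(\,\cdot\,;t,\la)$ and of $R_k(\,\cdot\,;t,\la)$ is strictly increasing in $\la$ and strictly increasing in $t$; combined with the previous paragraph and the monotonicity of $s\mapsto\sqrt{s}$, this shows that for each fixed $\nu\in\{1,\dots,\lfloor n/2\rfloor\}$ the positive zero $x_{\nu,n}$ increases when $\la$ increases and when $t$ increases, which are assertions (i) and (ii).

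If one prefers to avoid invoking Markov's theorem as a black box, the same conclusion follows self-containedly from the perturbation formula $\partial_\tau x_{\nu,n}=-c_{\nu,n}\int_0^\infty (x_{\nu,n}-s)^{-1}\,Q_k^2(s)\,[\partial_\tau\ln W_0(s)]\,W_0(s)\,\d s$ with $c_{\nu,n}>0$, obtained by expanding $\partial_\tau Q_k$ in the basis $\{Q_i\}_{i<k}$ and applying the Christoffel--Darboux kernel at $x_{\nu,n}$. Since $\int_0^\infty (x_{\nu,n}-s)^{-1}Q_k^2(s)\,W_0(s)\,\d s=0$ (the integrand there equals $-Q_k$ times a polynomial of degree $k-1$), one may replace $\partial_\tau\ln W_0(s)$ by $\partial_\tau\ln W_0(s)-\partial_\tau\ln W_0(x_{\nu,n})$, after which strict monotonicity of $s\mapsto\partial_\tau\ln W_0(s)$ makes the integrand of one sign and fixes the sign of $\partial_\tau x_{\nu,n}$; the sign change of $(x_{\nu,n}-s)^{-1}$, the only delicate point in this route, is precisely what the subtraction neutralises, and the same computation applies verbatim to $R_k$ and $W_1$.
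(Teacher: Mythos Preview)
Your argument is correct. The quadratic decomposition reduces the problem to monic orthogonal polynomials on $(0,\infty)$ with weights $s^{\la}\exp(ts-s^{m})$ and $s^{\la+1}\exp(ts-s^{m})$, for which $\partial_\la\ln W_j=\ln s$ and $\partial_t\ln W_j=s$ are strictly increasing; Markov's theorem then applies cleanly, and the monotone bijection $s\mapsto\sqrt{s}$ carries the conclusion back to the positive zeros of $P_n$. The integrability checks you sketch (in particular $s^{\la}|\ln s|\in L^1(0,1)$ for $\la>-1$) are adequate for differentiating the moments under the integral sign, and your optional self-contained derivation of the perturbation formula is also fine: the vanishing of $\int_0^\infty (x_{\nu,n}-s)^{-1}Q_k^2(s)W_0(s)\,\d s$ is exactly the observation that $Q_k(s)/(x_{\nu,n}-s)$ is a polynomial of degree $k-1$.

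The paper takes a different and much shorter route: it simply invokes \cite[Lemma~4.5]{refCJ21a}, a ready-made monotonicity lemma for symmetric weights of the form $|C(x)|^{\rho}\exp\{D(x)\}\,\w_0(x)$, specialised here with $C(x)=x$, $D(x)=x^2$, $\rho=2\la+1$ and $\w_0(x)=\exp(-x^{2m})$. That lemma is itself a packaged Markov-type statement, so the underlying mechanism is the same as yours; the difference is that you make the reduction explicit via the quadratic decomposition (which the paper only records in \S\ref{sec:qdecomp}), whereas the paper outsources the whole argument to a prior publication. Your version is more self-contained and makes transparent why the symmetry obstruction on $\R$ disappears on the half-line; the paper's version is terser but requires the reader to consult the earlier article.
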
 
\begin{proof} This follows from \cite[Lemma 4.5]{refCJ21a}, taking $C(x)=x$, $D(x)=x^2$, $\rho=2\la+1$ and $\w_0(x)=\exp(-x^{2m})$.
\end{proof}

\subsection{Interlacing of the zeros}
Next, for fixed $\la>-1$, $t\in \R$ and $k\in (0,1]$, we consider the relative positioning of the zeros of the monic generalised higher order Freud polynomials $\big\{P_{n}(x;\la)\big\}$ orthogonal with respect to the weight \eqref{freudg}, and the zeros of $\{P_{n}(x;\la+k)$, $k\in (0,1]$, orthogonal with respect to the weight \beq \nonumber
\w(x;t,\la)=|x|^{2\la+2k+1}\exp\left(tx^2-x^{2m}\right), \qquad m=2,3,\dots\ .\eeq 
The zeros of monic generalised higher order Freud polynomials $\big\{P_{n}(x;\la)\big\}$ orthogonal with respect to the symmetric weight
\eqref{freudg} are symmetric around the origin. 
%we write the even and odd degree polynomials as \begin{align*}
 % P_{2n}(x;\la)&=R_{n}(x^2;\la)\\
 % P_{2n+1}(x;\la)&=xQ_{n} (x^2;\la),
%\end{align*}
We denote the positive zeros of $P_{2n} (x;\la)$ by \[0<x_{n,2n}^{\la}<x_{n-1,2n}^{\la}<\dots<x_{2,2n}^{\la}<x_{1,2n}^{\la},\] and the positive zeros of $P_{2n+1} (x;\la)$ by \[0<x_{n,2n+1}^{\la}<x_{n-1,2n+1}^{\la}<\dots<x_{2,2n+1}^{\la}<x_{1,2n+1}^{\la},\] {noting that $x_{n+1,2n+1}=0$.}
\begin{theorem} \label{int} Let $\la>-1$ and $t\in \R$. Let $\big\{P_{n}(x;\la)\big\}$ be the monic generalised higher order Freud polynomials orthogonal with respect to the weight
\eqref{freudg}.
Then, for $\ell\in\{1,\dots,n-1\}$ and $k\in(0,1{\color{blue})}$, we have
\begin{eqnarray}\label{inteven}
x_{\ell+1,2n}^{\la}<x_{\ell,2n-1}^{\la}<x_{\ell,2n-1}^{\la+k}<x_{\ell,2n-1}^{\la+1}<x_{\ell,2n}^{\la},
\end{eqnarray}
and, for $\ell\in\{1,,\dots,n\}$,
\begin{eqnarray}\label{intodd}x_{\ell+1,2n+1}^{\la}<x_{\ell,2n}^{\la}<x_{\ell,2n}^{\la+k}<x_{\ell,2n}^{\la+1}=x_{\ell,2n+1}^{\la}.
\end{eqnarray}\end{theorem}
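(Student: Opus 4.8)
The plan is to assemble each displayed chain from three ingredients already available: the classical interlacing of the zeros of two \emph{consecutive} orthogonal polynomials with respect to the fixed weight \eqref{freudg}; the monotonicity of the zeros in the parameter from Theorem~\ref{mono1}, which gives $x_{\ell,k}^{\la}<x_{\ell,k}^{\la+k'}<x_{\ell,k}^{\la+1}$ for $0<k'\le1$ (here $\la+k'>-1$, so each $P_{k}(x;\la+k')$ is a genuine orthogonal polynomial with real, simple zeros); and the mixed recurrence relations of Lemma~\ref{mrec}, which link the parameter values $\la$ and $\la+1$.

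I would dispose of \eqref{intodd} first. The relation \eqref{rreven}, $xP_{2n}(x;\la+1)=P_{2n+1}(x;\la)$, shows that the positive zeros of $P_{2n+1}(x;\la)$ are exactly those of $P_{2n}(x;\la+1)$, giving the equality $x_{\ell,2n}^{\la+1}=x_{\ell,2n+1}^{\la}$. The remaining links $x_{\ell+1,2n+1}^{\la}<x_{\ell,2n}^{\la}$ and $x_{\ell,2n}^{\la}<x_{\ell,2n}^{\la+k}<x_{\ell,2n}^{\la+1}$ are, respectively, classical interlacing of $P_{2n}(\cdot;\la)$ and $P_{2n+1}(\cdot;\la)$, and Theorem~\ref{mono1}.

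For \eqref{inteven} the only inequality needing work is the last one, $x_{\ell,2n-1}^{\la+1}<x_{\ell,2n}^{\la}$, which I would obtain from the mixed relation \eqref{rrodd},
\[
x^2P_{2n-1}(x;\la+1)=xP_{2n}(x;\la)-A\,P_{2n-1}(x;\la),\qquad
A=\b_{2n}(\la)+\frac{P_{2n+1}'(0;\la)}{P_{2n-1}'(0;\la)},
\]
by a sign count. First note $A\neq0$: the left-hand side is an odd polynomial divisible by $x^3$, whereas $xP_{2n}(x;\la)$ has $x$-coefficient $P_{2n}(0;\la)\neq0$, so the $P_{2n-1}(x;\la)$ term must cancel it, forcing $A=P_{2n}(0;\la)/P_{2n-1}'(0;\la)\neq0$. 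Evaluating \eqref{rrodd} at two consecutive positive zeros $x_{\ell+1,2n}^{\la}<x_{\ell,2n}^{\la}$ of $P_{2n}(x;\la)$ yields $x^2P_{2n-1}(x;\la+1)=-A\,P_{2n-1}(x;\la)$ at each; since $-A$ has one fixed sign, $P_{2n-1}(\cdot;\la+1)$ takes opposite signs at $x_{\ell+1,2n}^{\la}$ and $x_{\ell,2n}^{\la}$ precisely when $P_{2n-1}(\cdot;\la)$ does. By classical interlacing, $P_{2n-1}(\cdot;\la)$ has exactly one zero in $(x_{\ell+1,2n}^{\la},x_{\ell,2n}^{\la})$ and changes sign there; hence so does $P_{2n-1}(\cdot;\la+1)$, which therefore has at least one zero in each of the $n-1$ disjoint intervals $(x_{\ell+1,2n}^{\la},x_{\ell,2n}^{\la})$, $\ell=1,\dots,n-1$. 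As $P_{2n-1}(\cdot;\la+1)$ has exactly $n-1$ positive zeros, each interval contains exactly one, and ordering by size gives $x_{\ell+1,2n}^{\la}<x_{\ell,2n-1}^{\la+1}<x_{\ell,2n}^{\la}$. Combining the upper bound with $x_{\ell+1,2n}^{\la}<x_{\ell,2n-1}^{\la}$ (classical interlacing of $P_{2n-1}(\cdot;\la)$ and $P_{2n}(\cdot;\la)$) and with $x_{\ell,2n-1}^{\la}<x_{\ell,2n-1}^{\la+k}<x_{\ell,2n-1}^{\la+1}$ (Theorem~\ref{mono1}) completes \eqref{inteven}.

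The step I expect to require the most care is the counting argument: one must check that ``at least one zero in each of the $n-1$ intervals'' together with ``exactly $n-1$ positive zeros'' forces exactly one zero per interval and none in $(0,x_{n,2n}^{\la})$ or $(x_{1,2n}^{\la},\infty)$, so that the $\ell$-th largest positive zero of $P_{2n-1}(\cdot;\la+1)$ is the one lying in $(x_{\ell+1,2n}^{\la},x_{\ell,2n}^{\la})$ and the indices align. Everything else — the sign of $A$, the elementary interlacing of consecutive orthogonal polynomials, and the reduction of the middle inequalities to Theorem~\ref{mono1} — is routine.
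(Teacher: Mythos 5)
Your proposal is correct and follows essentially the same route as the paper: classical interlacing of consecutive polynomials, the monotonicity in $\la$ from Theorem~\ref{mono1}, the identification of zeros via \eqref{rreven} for the odd case, and a sign-change count on \eqref{rrodd} evaluated at consecutive zeros of $P_{2n}(x;\la)$ for the even case. Your explicit verification that the constant $A=\b_{2n}(\la)+P_{2n+1}'(0;\la)/P_{2n-1}'(0;\la)$ is nonzero is a small but worthwhile addition, since the paper's argument (which squares $A$ and asserts a strictly negative product) tacitly relies on it.
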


\begin{proof} The zeros of two consecutive polynomials in the sequence of generalised higher order Freud orthogonal polynomials are interlacing, that is, 
\beq\label{3.13a}
0<x_{n,2n}^{\la}<x_{n-1,2n-1}^{\la}<x_{n-1,2n}^{\la}<\dots<x_{2,2n}^{\la}<x_{1,2n-1}^{\la}<x_{1,2n}^{\la}
\eeq and 
\beq\label{3.13b}
0<x_{n,2n}^{\la}<x_{n,2n+1}^{\la}<x_{n-1,2n}^{\la}<\dots<x_{2,2n+1}^{\la}<x_{1,2n}^{\la}<x_{1,2n+1}^{\la}.
\eeq On the other hand, we proved in Theorem \ref{mono1} that the positive zeros of generalised higher order Freud polynomials monotonically increase as the parameter$\la$ increases. This implies that, for each fixed $\ell\in\{1,2,\dots,n\}$ and $k\in(0,1)$, 
\beq\label{moe} x_{ \ell ,2n}^{\la}<x_{\ell,2n}^{\la+k}<x_{\ell,2n }^{\la+1},\eeq and
\beq\label{moo} x_{ \ell,2n-1}^{\la}<x_{\ell,2n-1}^{\la+k}<x_{\ell,2n-1}^{\la+1}.\eeq 
Next, we prove that the zeros of $P_{2n}(x;\la)$ interlace with those of $P_{2n-1}(x;\la+1)$. From \eqref{rrodd},
\beq\label{l+22}
P_{2n-1}(x;\la+1)=\frac{xP_{2n}(x;\la)-(\b_{2n}(\la)+P_{2n+1}'(0;\la)/P_{2n-1}'(0;\la))P_{2n-1}(x;\la)}{x^2}.
\eeq
Evaluating \eqref{l+22} at consecutive zeros $x_{\ell}=x_{\ell,n}^{(\la)}$ and $x_{\ell+1}=x_{\ell+1,n}^{(\la)}$, $\ell=1, 2, \ldots, n-1$, of $P_{2n}(x;\la)$, we obtain
\[\begin{split}
P_{2n-1}&(x_{\ell};\la+1)P_{2n-1}(x_{\ell+1};\la+1)\\&=\frac{(\b_{2n}(\la)+P_{2n+1}'(0;\la)/P_{2n-1}'(0;\la) )^{\!2}P_{2n-1}(x_{\ell};\la)P_{2n-1}(x_{\ell+1};\la)}{x_{\ell}^2 x_{\ell+1}^2}<0,
\end{split}\]
since the zeros of $P_{2n}(x;\la)$ and $P_{2n-1}(x;\la)$ separate each other. So there is at least one positive zero of $P_{2n}(x;\la+1)$ in the interval $(x_{\ell}, x_{\ell}+1)$ for each $\ell=1, 2, \ldots, n-1$
since there are exactly $n$ positive zeros and 
and this implies that
\beq
0<x_{n,2n}^{\la}<x_{n-1,2n-1}^{\la+1}<x_{n-1,2n}^{\la}<x_{n-2,2n-1}^{\la+1}< %\dots\nonumber\\
%\nonumber &\dots <x_{l,n}(t;\la)<x_{\ell,n-1}(t,\la+1)<x_{\ell+1,n}(t;\la)<x_{\ell+1,n-1}(t,\la+1)<\dots\\
\dots<x_{2,2n-1}^{\la+1}<x_{2,2n}^{\la}<x_{1,2n-1}^{\la+1}<x_{1,2n}^{\la}\label{3.14b}.
\eeq
Equations \eqref{3.13a}, \eqref{moo} and \eqref{3.14b} yield \eqref{inteven}. To prove \eqref{intodd}, we note that by \eqref{rreven} the $n$ positive zeros of $P_{2n}(x,\la+1)$ and $P_{2n+1}(x;\la)$ coincide, i.e $x_{\ell,2n}^{\la+1}=x_{\ell,2n+1}^{\la}$ for $\ell\in\{1,2,\dots,n\}$, and the result follows using \eqref{3.13b} and \eqref{moe}.\end{proof}

Figure \ref{inte} shows the interlacing of the zeros of polynomials orthogonal with respect to the generalised higher order Freud weight \eqref{freudg} for $m=3$ as described in \eqref{3.13a} of Theorem \ref{int}.

	\begin{figure}[ht]
	\begin{center}
\includegraphics[width=4in]{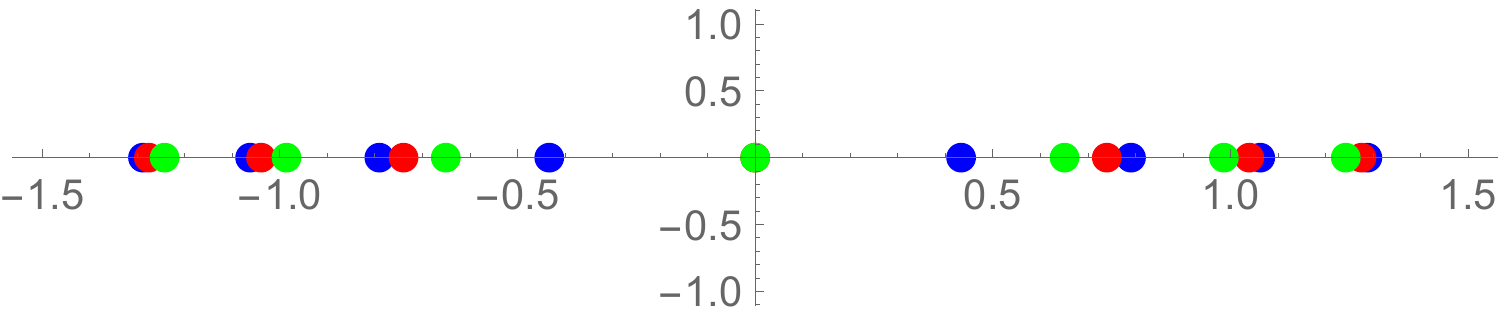} 
	\caption{\label{inte}The zeros of $P_{7}(x;\la)$ ({\color{dkg}{green}}), $P_{7}(x;\la+1)$ ({\color{red}{red}}) and $P_{8}(x;\la)$ ({\color{dkb}{blue}}) for $\la =0.5$ and $t=1$.}
		\end{center}
	\end{figure}

Figure \ref{into} illustrates the interlacing of the zeros of polynomials orthogonal with respect to the generalised higher order Freud weight \eqref{freudg} for $m=3$ as described in \eqref{3.13b} of Theorem \ref{int}.

	\begin{figure}[ht]
	\begin{center}
\includegraphics[width=4in]{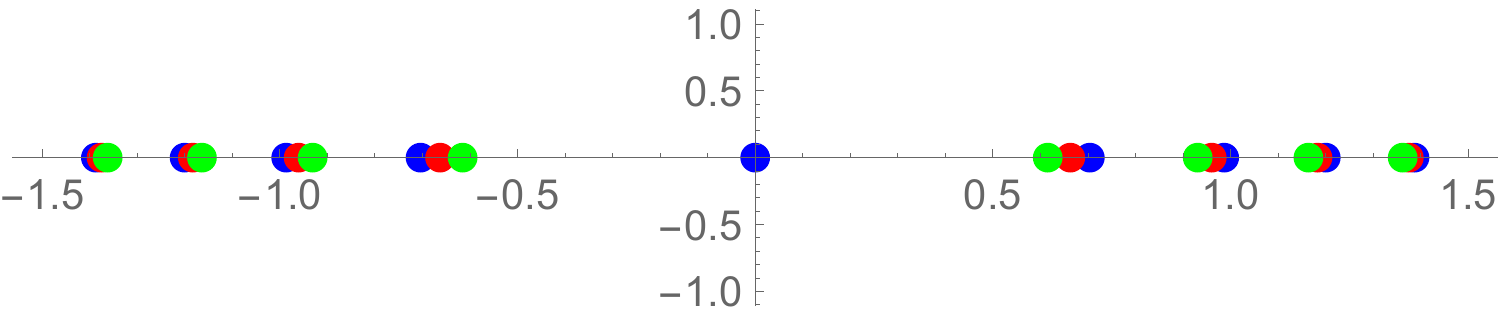} 
	\caption{\label{into}The zeros of $P_{8}(x;\la)$ ({\color{dkg}{green}}), $P_{8}(x;\la+0.5)$ ({\color{red}{red}}), $xP_8(x;\la+1)$ ({\color{dkb}{blue}}) and $P_{9}(x;\la)$ ({\color{dkb}{blue}}) for $\la =1.5$ and $t=2.3$.}
		\end{center}
	\end{figure}
\section{Quadratic decomposition of the generalised higher order Freud weight}\label{sec:qdecomp}

{We apply known results \cite[Chapter 1, Theorem 9.1]{refChihara} on the quadratic decomposition of any symmetric polynomials to this particular case of generalised higher-order Freud weights}. Precisely, if
\[
	P_{2n} (x;t,\la) = B_{n}(x^2;t,\la,) \qquad %\text{and}\quad 
	P_{2n+1} (x;t,\la) = xR_{n}(x^2;t,\la), \qquad \text{for all}\quad n\geq 0, 
\] 
then from the recurrence relation \eqref{eq:srr} we have
\begin{align*}
	B_{n+1} (x;t,\la) &= R_{n+1}(x;t,\la) + \b_{2n+2} R_{n}(x;t,\la), \\
 	x R_{n} (x;t,\la) &= B_{n+1}(x;t,\la) + \b_{2n+1} B_{n}(x;t,\la) ,
\end{align*}
and this gives second order recurrence relations for both $\{B_{n}\}_{n\geq0}$ and $\{R_{n}\}_{n\geq0}$ as follows 
\begin{align*}
	& \begin{cases}
	B_{n+1}(x;t,\la) = \left(x - \b_{2n}-\b_{2n+1}\right) B_{n}(x;t,\la) - \b_{2n-1}\b_{2n} B_{n-1}(x;t,\la),\ n\geq 1, \\
	B_1(x;t,\la) = x -\b_1, \ B_0(x;t,\la)=1, 
	\end{cases}
 \\
	& \begin{cases}
	 R_{n+1}(x;t,\la) = \left(x - \b_{2n+2}-\b_{2n+1}\right) R_{n}(x;t,\la) - \b_{2n+1}\b_{2n} R_{n-1}(x;t,\la), \\
	 R_1(x;t,\la) = x -\b_1-\b_2, \ R_0(x;t,\la)=1.
	\end{cases}
\end{align*}
%with $\b_{n}:=\b_{n}(t;\la),\ n\geq1$.
Furthermore, $\{B_{n}\}_{n\geq 0}$ and $\{R_{n}\}_{n\geq 0}$ satisfy the orthogonality relations 
\begin{align*}
&	\int_0^{\infty} B_k(x;t,\la) B_{n}(x;t,\la) \, x^{\lambda} \exp(tx-x^{m})\, \d x = h_{2n}(t,\la) \delta_{n,k},\\
&	\int_0^{\infty} R_k(x;t,\la) R_{n}(x;t,\la) \, x^{\lambda+1} \exp(tx-x^{m})\, \d x = h_{2n+1}(t,\la) \delta_{n,k},\quad n,k\geq 0. 
\end{align*}

\section*{Acknowledgement}
{PAC and KJ gratefully acknowledge the support of a Royal Society Newton Advanced Fellowship NAF$\backslash$R2$\backslash$180669.}
{We also thank the reviewers for helpful suggestions and additional references.}

	\def\ams{American Mathematical Society}
	\def\AAM{Acta Appl. Math.}
	\def\ARMA{Arch. Rat. Mech. Anal.}
	\def\bull{Acad. Roy. Belg. Bull. Cl. Sc. (5)}
	\def\AC{Acta Crystrallogr.}
	\def\AM{Acta Metall.}
	\def\ampa{Ann. Mat. Pura Appl. (IV)}
	\def\AP{Ann. Phys., Lpz.}
	\def\APNY{Ann. Phys., NY}
	\def\APP{Ann. Phys., Paris}
	\def\BAMS{Bull. Amer. Math. Soc.}
	\def\CJP{Can. J. Phys.}
	\def\cmp{Commun. Math. Phys.}
	\def\CMP{Commun. Math. Phys.}
	\def\cpam{Commun. Pure Appl. Math.}
	\def\CPAM{Commun. Pure Appl. Math.}
	\def\CQG{Classical Quantum Grav.}
	\def\crp{C.R. Acad. Sc. Paris}
	\def\CSF{Chaos, Solitons \&\ Fractals}
	\def\DE{Diff. Eqns.}
	\def\DU{Diff. Urav.}
	\def\ejam{Europ. J. Appl. Math.}
	\def\EJAM{Europ. J. Appl. Math.}
	\def\funk{Funkcial. Ekvac.}
	\def\FUNK{Funkcial. Ekvac.}
	\def\IP{Inverse Problems}
	\def\JAMS{J. Amer. Math. Soc.}
	\def\JAP{J. Appl. Phys.}
	\def\JCP{J. Chem. Phys.}
	\def\JDE{J. Diff. Eqns.}
	\def\JFM{J. Fluid Mech.}
	\def\JJAP{Japan J. Appl. Phys.}
	\def\JP{J. Physique}
	\def\JPhCh{J. Phys. Chem.}
	\def\JMAA{J. Math. Anal. Appl.}
	\def\JMMM{J. Magn. Magn. Mater.}
	\def\JMP{J. Math. Phys.}
	\def\jmp{J. Math. Phys}
	\def\JNMP{J. Nonl. Math. Phys.}
	\def\jpa{J. Phys. A}
	\def\JPA{J. Phys. A}%: Math. Gen.}
	\def\JPB{J. Phys. B: At. Mol. Phys.} %1968-87
	\def\jpb{J. Phys. B: At. Mol. Opt. Phys.} %1988 and onwards
	\def\JPC{J. Phys. C: Solid State Phys.} %1968--1988
	\def\JPCM{J. Phys: Condensed Matter} %1989 and onwards
	\def\JPD{J. Phys. D: Appl. Phys.}
	\def\JPE{J. Phys. E: Sci. Instrum.}
	\def\JPF{J. Phys. F: Metal Phys.}
	\def\JPG{J. Phys. G: Nucl. Phys.} %1975--1988
	\def\jpg{J. Phys. G: Nucl. Part. Phys.} %1989 and onwards
	\def\JSP{J. Stat. Phys.}
	\def\JOSA{J. Opt. Soc. Am.}
	\def\JPSJ{J. Phys. Soc. Japan}
	\def\JQSRT{J. Quant. Spectrosc. Radiat. Transfer}
	\def\LMP{Lett. Math. Phys.}
	\def\LNC{Lett. Nuovo Cim.}
	\def\NC{Nuovo Cim.}
	\def\NIM{Nucl. Instrum. Methods}
	\def\NL{Nonlinearity}
	\def\NMJ{Nagoya Math. J.}
	\def\NP{Nucl. Phys.}
	\def\pl{Phys. Lett.}
	\def\PL{Phys. Lett.}
	\def\PMB{Phys. Med. Biol.}
	\def\PR{Phys. Rev.}
	\def\PRL{Phys. Rev. Lett.}
	\def\PRS{Proc. R. Soc.}
	\def\prsl{Proc. R. Soc. Lond. A}
	\def\PRSL{Proc. R. Soc. Lond. A}
	\def\PS{Phys. Scr.}
	\def\PSS{Phys. Status Solidi}
	\def\PTRS{Phil. Trans. R. Soc.}
	\def\RMP{Rev. Mod. Phys.}
	\def\RPP{Rep. Prog. Phys.}
	\def\RSI{Rev. Sci. Instrum.}
	\def\SAM{Stud. Appl. Math.}
	\def\sam{Stud. Appl. Math.}
	\def\SSC{Solid State Commun.}
	\def\SST{Semicond. Sci. Technol.}
	\def\SUST{Supercond. Sci. Technol.}
	\def\ZP{Z. Phys.}
	\def\JCAM{J. Comput. Appl. Math.}
	
	\def\OUP{Oxford University Press}
	\def\CUP{Cambridge University Press}
	\def\AMS{American Mathematical Society}

	\def\bitem{\vspace{-5pt}\bibitem}
	\def\refjl#1#2#3#4#5#6#7{\bitem{#1}\textrm{\frenchspacing#2}, \textrm{#3},
\textit{\frenchspacing#4}, \textbf{#5}\ (#7)\ #6.}
	
	\def\refjnl#1#2#3#4#5#6#7{\bitem{#1}{\frenchspacing\rm#2}, #3, 
{\frenchspacing\it#4}, {\bf#5} (#6) #7.}
	
	\def\refpp#1#2#3#4{\bitem{#1} \textrm{\frenchspacing#2}, \textrm{#3}, #4.}
	
	\def\refjltoap#1#2#3#4#5#6#7{\bitem{#1} \textrm{\frenchspacing#2}, \textrm{#3},
\textit{\frenchspacing#4} (#7). %, \textbf{#5}
#6.}% (#7).}
	
	\def\refbk#1#2#3#4#5{\bitem{#1} \textrm{\frenchspacing#2}, \textit{#3}, #4, #5.}
	
	\def\refcf#1#2#3#4#5#6#7{\bitem{#1} \textrm{\frenchspacing#2}, \textrm{#3},
in: \textit{#4}, {\frenchspacing#5}, pp.\ #7, #6.}

\end{document}